\newtheorem{observation}{Remark}[section]
\newtheorem{lemma}[observation]{Lemma}  
\newtheorem{theorem}[observation]{Theorem}
\newtheorem{definition}[observation]{Definition}
\newtheorem{example}[observation]{Example}
\newtheorem{proposition}[observation]{Proposition} 
\newtheorem{corollary}[observation]{Corollary}
\title{Dagger-Drazin Inverses}
\author{Robin Cockett\thanks{Partially funded by NSERC.}
\institute{Department of Computer Science\\
University of Calgary, Canada}
\email{robin@ucalgary.ca}
\and
Jean-Simon Pacaud Lemay
\institute{School of Mathematical and Physical Sciences\\
Macquarie University, Australia}
\email{js.lemay@mq.edu.au}
\and
Priyaa Varshinee Srinivasan
\thanks{Funded  by the Estonian Research Council (MOB3JD1227).}
\institute{Department of Software Sciences\\
Tallinn University of Technology, Estonia}
\email{priyaavarshinee@gmail.com}
}
\begin{document}
\maketitle


\begin{abstract}
Drazin inverses are a special kind of generalized inverses that can be defined for endomorphisms in any category. A natural question to ask is whether one can somehow extend the notion of Drazin inverse to arbitrary maps -- not simply endomorphisms. It turns out that this is possible and, indeed, natural to do so for dagger categories. This paper, thus, introduces the notion of a dagger-Drazin inverse, which is a new kind of generalized inverse appropriate for arbitrary maps in a dagger category. This inverse is closely related to the Drazin inverse, for having dagger-Drazin inverses is equivalent to asking that positive maps have Drazin inverses. Moreover, dagger-Drazin inverses are also closely related to Moore-Penrose inverses as we observe that a map has a Moore-Penrose inverse if and only if it is a Drazin inverse. Furthermore, we explain how Drazin inverses of opposing pairs correspond precisely to dagger-Drazin inverses in cofree dagger categories. We also give examples of dagger-Drazin inverses for matrices over (involutive) fields, bounded linear operators, and partial injections. 
\end{abstract}

\section{Introduction} 

Generalized inverses are of particular interest in matrix theory \cite{campbell2009generalized} since they allow one to solve linear equations $A \vec x = \vec y$ even when $A$ is not necessarily invertible. One such kind of generalized inverse that is of interest is the \emph{Drazin inverse} \cite[Chap 7]{campbell2009generalized} -- named after Michael P. Drazin who originally introduced the concept under the name ``pseudo-inverse'' \cite{drazin1958pseudo}. Every square matrix over a field has a Drazin inverse \cite[Def 7.2.3]{campbell2009generalized}, and can be used to solve linear differential equations \cite[Chap 9]{campbell2009generalized}. 

While Drazin inverses have been extensively studied in matrix theory, the notion of a Drazin inverse can more generally be defined in an arbitrary semigroup \cite{drazin1958pseudo,munn1961pseudo} and they have also been well studied in ring theory, as they are deeply connected to strong $\pi$-regularity \cite{azumaya1954strongly} and Fitting's Lemma/Theorem \cite{Fitting1933}. As such, one can consider Drazin inverses of matrices over a ring \cite{puystjens2004drazin}, linear endomorphisms of modules \cite{wang2017class}, and even bounded linear endomorphisms of Banach spaces \cite{King1977ANO} or Hilbert spaces \cite{wei2003representation}. The Drazin inverse is a useful tool for computation and thus has found many applications. In particular, for quantum information theory, Drazin inverses have been used to characterize quantum gates \cite{wu2014remarks}, for quantum Turing automata \cite{EPTCS143.2}, in quantum error mitigation \cite{cao2021nisq}, and quantum thermodynamics \cite{chen2021extrapolating,chen2023optimal,miller2019work}. 

There has also been renewed interest in studying Drazin inverses in categories. They arise naturally since for any object $A$ in a category $\mathbb{X}$, the homset $\mathbb{X}(A,A)$ is a semigroup (in fact a monoid) with respect to composition. As such, we may consider Drazin inverses in $\mathbb{X}(A,A)$. Thus, we may talk about Drazin inverses of endomorphisms in an arbitrary category (Def \ref{def:Drazin}). Drazin inverses in categories were introduced by Puystjens and Robinson in their 1987 paper \cite{robinson1987generalized}, where they were particularly interested in generalized inverses in \emph{additive} categories. Unfortunately, a deeper categorical approach to Drazin inverses was never initiated until very recently. In \cite{cockett2024drazin}, the authors further developed the theory of Drazin inverse in category theory. While, as one might expect, one can generalize ring theoretic Drazin inverse results to additive/abelian categories \cite[Sec 6]{cockett2024drazin}, the categorical perspective also broaden the theory of Drazin inverses in unexpected ways, such as relating Drazin inverses to the idempotent completion \cite[Sec 5]{cockett2024drazin} and to the notion of eventual image duality \cite{leinster2022eventual}. However, arriving with categorical eyes to the subject of Drazin inverses it is natural to want to have a notion of a Drazin inverse for arbitrary maps, not simply endomorphisms. The main purpose of this paper is to explain how this is quite natural in \emph{dagger} categories. 

Recall that a dagger category (which we write as $\dagger$-category) is a category equipped with an involution $\dagger$ on maps. The term ``dagger category'' was introduced by Selinger in \cite{selinger2007dagger} based on the use in physics of the symbol $\dagger$ for conjugate transpose. The theory of $\dagger$-categories is now a firmly established sub-field of category theory with a rich literature. In particular, $\dagger$-categories are a fundamental component of categorical quantum mechanics. In this paper, we introduce the notion of a \emph{dagger-Drazin inverse} (which we write as $\dagger$-Drazin inverses), which is a new kind of generalized inverse for maps in a $\dagger$-category (Def \ref{def:dagger-Drazin}). In particular, $\dagger$-Drazin inverses can be defined for maps of any type and not simply endomorphisms. While a $\dagger$-Drazin inverse of a map may not always exist, if it does it is unique (Prop \ref{prop:dag-Draz-unique}). As the name suggests, $\dagger$-Drazin inverses are in fact deeply connected to Drazin inverses (Sec \ref{sec:dag-Drazin-Drazin}). First of all, for self-adjoint maps (which are always endomorphisms), the notion of $\dagger$-Drazin inverses and Drazin inverses coincide (Lemma \ref{lem:self-adjoint}). Having $\dagger$-Drazin inverses it transpires  corresponds precisely to positive maps (which recall are always self-adjoint endomorphisms) having Drazin inverses (Thm \ref{thm:Drazin=dag-Drazin}). We give examples of $\dagger$-Drazin inverses for matrices over fields (Ex \ref{ex:matrices-field}), matrices over involutive fields (Ex \ref{ex:matrices-inv-field} \& \ref{ex:complex-matrices}), bounded linear operators between Hilbert spaces (Ex \ref{ex:Hilb} \& \ref{ex:FHILB}), and partial injections (Ex \ref{ex:inv-cat} \& \ref{ex:PINJ}). 

Moreover, $\dagger$-Drazin inverses are related to another kind of generalized inverse: the \emph{Moore-Penrose} inverse.  It is named after E. H. Moore and R. Penrose: Moore introduced the notion in 1920 \cite{moore1920reciprocal}, and it was independently rediscovered by Penrose in 1955 \cite{penrose1955generalized}. The Moore-Penrose inverse is arguably the most well-known generalized inverse, with many interesting applications \cite{baksalary2021moore}. In particular, every complex matrix has a Moore-Penrose inverse constructed using Singular Value Decomposition \cite[Chap 1]{campbell2009generalized}. Moore-Penrose inverses can also be defined in $\dagger$-categories, as was first done by Puystjens and Robinson in a series of papers in the 1980s \cite{puystjens1981moore, puystjens1984moore, robinson1985ep, robinson1987generalized, puystjens1990symmetric}, and later picked up again by the first and second named author in \cite{EPTCS384.10} for QPL2023. It turns out that one can give a novel alternative characterization of the Moore-Penrose inverse using $\dagger$-Drazin inverses. Indeed, a map has a Moore-Penrose inverse if and only if it is \emph{a} $\dagger$-Drazin inverse (Thm \ref{thm:MP=a-dag-Drazin}), and in this case the Moore-Penrose inverse is the $\dagger$-Drazin inverse. 

We also explain how $\dagger$-Drazin inverses are connected to the notion of Drazin inverses for opposing pairs \cite[Sec 7]{cockett2024drazin} (which was the authors' first attempt at generalizing Drazin inverses for arbitrary maps in a category) via \emph{cofree} $\dagger$-categories. For any category, there is a cofree $\dagger$-category over it \cite{heunen2009categorical}, and cofree $\dagger$-categories are used in reversible computation \cite{jacobs2009categorical}.  It turns out that Drazin inverses for opposing pairs correspond precisely to $\dagger$-Drazin inverses in cofree $\dagger$-categories (Thm \ref{thm:dag-Drazin-cofree}). 

Lastly, it is important to note the perhaps subtle distinction we make throughout this paper between \emph{being} a $\dagger$-Drazin inverse and \emph{having} a $\dagger$-Drazin inverse: the former is a stronger requirement than the latter. Thus, when a map \emph{has} a $\dagger$-Drazin inverse, it is not necessarily the case that it \emph{is} a $\dagger$-Drazin inverse.  In particular, a map with a $\dagger$-Drazin inverse is not in general the $\dagger$-Drazin inverse of its $\dagger$-Drazin inverse. On the other hand, every $\dagger$-Drazin inverse does have a $\dagger$-Drazin inverse and, in this case, it is also the $\dagger$-Drazin inverse of its $\dagger$-Drazin inverse (Prop \ref{prop:dag-Draz}.(\ref{prop:dag-Draz-dag-Draz})+(\ref{prop:dag-Draz-dag-Draz-dag-Drax})).

\section{Dagger-Drazin Inverses}

In this section we introduce the main novel concept of this paper: \emph{dagger-Drazin inverses}, which is a new kind of generalized inverses in a dagger category that is, as we will see in the next section, deeply connected to the notion of Drazin inverses. 

For an arbitrary category $\mathbb{X}$, we denote objects by capital letters $A,B,C$, etc. and maps by lowercase letters $f,g,x,y$, etc. Identity maps are denoted as $1_A: A \to A$.  Composition is written in \emph{diagrammatic order}, that is, the composition of a map $f: A \to B$ followed by $g: B \to C$ is denoted $fg: A \to C$. Then recall that a \textbf{dagger} on a category $\mathbb{X}$ is a contravariant functor $(\_)^\dagger: \mathbb{X} \to \mathbb{X}$ which is the identity on objects and involutive. We refer to $f^\dagger$ as the \textbf{adjoint} of $f$. A \textbf{$\dagger$-category} is a category $\mathbb{X}$ equipped with a  chosen dagger $\dagger$. Concretely, a $\dagger$-category can be described as a category $\mathbb{X}$ where for each map $f: A \to B$, there is a chosen map of dual type $f^\dagger: B \to A$ such that $1^\dagger_A = 1_A$, $(fg)^\dagger = g^\dagger f^\dagger$, and $(f^\dagger)^\dagger = f$. For a more in-depth introduction to dagger categories, we refer the reader to \cite{heunen2019categories}. 

\begin{definition}\label{def:dagger-Drazin} In a $\dagger$-category, a \textbf{$\dagger$-Drazin inverse} of a map $f: A \to B$ is a map of dual type $f^\partial: B \to A$ such that:  
\begin{enumerate}[{\bf [$\text{D}^{\dagger}$.1]}]
\item There is a $k \in \mathbb{N}$ such that\footnote{By convention, for an endomorphism $x$, $x^0 = 1_A$.} $(f f^\dagger)^k f f^\partial = (f f^\dagger)^k$ and $f^\partial f (f^\dagger f)^k = (f^\dagger f)^k$; 
\item $f^\partial f f^\partial = f^\partial$; 
\item $(ff^\partial)^\dagger = ff^\partial$;
\item $(f^\partial f)^\dagger = f^\partial f$.
\end{enumerate}
If $f$ has a $\dagger$-Drazin inverse, we say that $f$ is \textbf{$\dagger$-Drazin invertible} or simply \textbf{$\dagger$-Drazin}. We call the smallest natural number $k$ such that \textbf{[$\text{D}^{\dagger}$.1]} holds the \textbf{$\dagger$-Drazin index of $f$}, which we denote by $\mathsf{ind}^\partial(f)$. A \textbf{$\dagger$-Drazin category} is a $\dagger$-category such that every map is $\dagger$-Drazin. 
\end{definition}

\begin{proposition} \label{prop:dag-Draz-unique} In a $\dagger$-category,  if a map has a $\dagger$-Drazin inverse, then it is unique. 
\end{proposition}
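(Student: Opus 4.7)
The plan is to show that two $\dagger$-Drazin inverses $g, h \colon B \to A$ of $f \colon A \to B$ coincide, by deriving the pair of identities $g = hfg$ and $g = gfh$ (and by swapping $g \leftrightarrow h$, the corresponding $h = gfh$); comparing $g = gfh$ with $h = gfh$ then immediately gives $g = h$.

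First I would establish two algebraic facts. Combining \textbf{[$\text{D}^{\dagger}$.2]}--\textbf{[$\text{D}^{\dagger}$.4]} yields the intertwining $g \cdot ff^\dagger = f^\dagger f \cdot g$ via the calculation $g \cdot ff^\dagger = (gf)f^\dagger = (f^\dagger g^\dagger)f^\dagger = f^\dagger(g^\dagger f^\dagger) = f^\dagger \cdot fg = f^\dagger f \cdot g$, which iterates to $g(ff^\dagger)^n = (f^\dagger f)^n g$, and the same holds for $h$. Next, from $g = gfg = (gf)^\dagger g = f^\dagger g^\dagger g$ one gets $fg = (ff^\dagger)(g^\dagger g)$, and self-adjointness of $fg$ forces $g^\dagger g$ to commute with $ff^\dagger$. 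Dually, $g = g(fg)^\dagger = gg^\dagger f^\dagger$ gives $gf = (gg^\dagger)(f^\dagger f)$ with $gg^\dagger$ commuting with $f^\dagger f$. Since $fg$ and $gf$ are idempotent (they square to themselves by \textbf{[$\text{D}^{\dagger}$.2]}), these commutation relations upgrade to the crucial power identities $(ff^\dagger)^n (g^\dagger g)^n = fg$ and $(gg^\dagger)^n(f^\dagger f)^n = gf$ for all $n \geq 1$.

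After taking a common index $n \geq 1$ for $g$ and $h$ (which is legitimate since \textbf{[$\text{D}^{\dagger}$.1]} is preserved under increasing $k$), the key step is a double evaluation of the product $g(ff^\dagger)^n \cdot fh$: by associativity and $(ff^\dagger)^n fh = (ff^\dagger)^n$ one direction gives $g(ff^\dagger)^n$, while using $(ff^\dagger)^n f = f(f^\dagger f)^n$ together with \textbf{[$\text{D}^{\dagger}$.1]} for $g$ gives $g(ff^\dagger)^n f = (f^\dagger f)^n$, so the other direction gives $(f^\dagger f)^n h$. Combining with the intertwining for $h$ yields $g(ff^\dagger)^n = h(ff^\dagger)^n$, and a symmetric double evaluation on the $(f^\dagger f)^n$ side gives $(f^\dagger f)^n g = (f^\dagger f)^n h$.

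The endgame is then a chain of rewrites:
\[
g \;=\; gfg \;=\; g\cdot fg \;=\; g(ff^\dagger)^n(g^\dagger g)^n \;=\; \bigl(g(ff^\dagger)^n\bigr)(g^\dagger g)^n \;=\; \bigl(h(ff^\dagger)^n\bigr)(g^\dagger g)^n \;=\; h\cdot fg \;=\; hfg,
\]
and the dual chain using $gf = (gg^\dagger)^n(f^\dagger f)^n$ together with $(f^\dagger f)^n g = (f^\dagger f)^n h$ gives $g = gfh$. Swapping the roles of $g$ and $h$ throughout yields $h = gfh$, and therefore $g = gfh = h$. The main difficulty will be careful bookkeeping of types — since none of $f, g, h$ is an endomorphism, every composite must be checked for well-typedness — and the key conceptual input is that \textbf{[$\text{D}^{\dagger}$.3]}--\textbf{[$\text{D}^{\dagger}$.4]} force the commutativity of $g^\dagger g$ with $ff^\dagger$ (and dually), which is what allows the idempotency of $fg$ to lift to the $n$-th power form used in the final chain.
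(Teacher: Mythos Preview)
Your proof is correct. The overall strategy --- establishing $g = hfg$ and (by symmetry) $h = hfg$ via absorption against high powers of $ff^\dagger$ and $f^\dagger f$ --- is the same as the paper's, which also reduces to showing $f^\delta = f^\delta f f^\partial = f^\partial$. The technical packaging differs slightly: the paper directly builds the recursive expansion $f^\partial = (f^\dagger f)^m f^\partial \bigl((f^{\partial\dagger} f^\partial)^\dagger\bigr)^m$ and then absorbs the other inverse using \textbf{[$\text{D}^{\dagger}$.1]}, whereas you isolate three modular lemmas (the intertwining $g(ff^\dagger) = (f^\dagger f)g$, the commutation of $g^\dagger g$ with $ff^\dagger$, and the power identity $(ff^\dagger)^n(g^\dagger g)^n = fg$) before assembling them. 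Note that your intertwining combined with your power identity \emph{is} the paper's expansion formula, since $g(ff^\dagger)^n(g^\dagger g)^n = (f^\dagger f)^n g (g^\dagger g)^n$. Your version makes the ingredients more visible; the paper's is a bit shorter.
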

\begin{proof} Suppose that a map $f$ has two possible $\dagger$-Drazin inverses $f^\partial$ and $f^\delta$. To show that these maps are indeed equal, it will be useful to first compute that for any $n \in \mathbb{N}$ the following equalities hold: 
\begin{align}\label{eq:useful-unique}     f^\partial = \left( f^\dagger f \right)^m f^\partial \left( ({f^\partial}^\dagger f^\partial)^\dagger \right)^m  && f^\delta = \left( f^\delta ({f^\delta}^\dagger )^\dagger \right)^m f^\delta (f f^\dagger)^m 
\end{align}
Clearly these hold for $m=0$. Then for $m \geq 1$, to compute the above equality on the left, we recursively use \textbf{[$\text{D}^{\dagger}$.2]}, \textbf{[$\text{D}^{\dagger}$.3]}, and \textbf{[$\text{D}^{\dagger}$.4]} as follows: 
\begin{gather*}
 f^\partial \stackrel{\text{\textbf{[$\text{D}^{\dagger}$.2]}}}{=} f^\partial f  f^\partial \stackrel{\text{\textbf{[$\text{D}^{\dagger}$.4]}}}{=} (f^\partial f)^\dagger f^\partial = f^\dagger {f^\partial}^\dagger f^\partial \stackrel{\text{\textbf{[$\text{D}^{\dagger}$.2]}}}{=} f^\dagger (f^\partial f  f^\partial)^\dagger f^\partial = f^\dagger {f^\partial}^\dagger f^\dagger {f^\partial}^\dagger f^\partial \\
 = f^\dagger (f f^\partial)^\dagger ({f^\partial}^\dagger f^\partial)^\dagger \stackrel{\text{\textbf{[$\text{D}^{\dagger}$.3]}}}{=} f^\dagger f f^\partial ({f^\partial}^\dagger f^\partial)^\dagger = ... = \left( f^\dagger f \right)^m f^\partial \left( ({f^\partial}^\dagger f^\partial)^\dagger \right)^m 
\end{gather*}
Similarly, we can also prove the right equality of (\ref{eq:useful-unique}). Now by \textbf{[$\text{D}^{\dagger}$.1]}, there exists a $k_1 \in \mathbb{N}$ such that $(f f^\dagger)^{k_1} f f^\partial = (f f^\dagger)^{k_1}$, and there also exists a $k_2 \in \mathbb{N}$ such that $f^\delta f (f^\dagger f)^{k_2} = (f^\dagger f)^{k_2}$. Then we compute:
\begin{gather*}
f^\delta \stackrel{\text{(\ref{eq:useful-unique})}}{=} \left( f^\delta ({f^\delta}^\dagger )^\dagger \right)^k f^\delta (f f^\dagger)^k  \stackrel{\text{\textbf{[$\text{D}^{\dagger}$.1]}}}{=}\left( f^\delta ({f^\delta}^\dagger )^\dagger \right)^k f^\delta (f f^\dagger)^k f f^\partial \stackrel{\text{(\ref{eq:useful-unique})}}{=} f^\delta f f^\partial  \\
\stackrel{\text{(\ref{eq:useful-unique})}}{=} f^\delta f \left( f^\dagger f \right)^{k_2} f^\partial \left( ({f^\partial}^\dagger f^\partial)^\dagger \right)^{k_2} \stackrel{\text{\textbf{[$\text{D}^{\dagger}$.1]}}}{=} \left( f^\dagger f \right)^{k_2} f^\partial \left( ({f^\partial}^\dagger f^\partial)^\dagger \right)^{k_2}  \stackrel{\text{(\ref{eq:useful-unique})}}{=} f^\partial
\end{gather*}
So $f^\delta= f^\partial$, and therefore we conclude that $\dagger$-Drazin inverses are unique. 
\end{proof}

Therefore, we can now speak of \emph{the} $\dagger$-Drazin inverse of a map. This also implies that for a $\dagger$-category, being $\dagger$-Drazin is a property rather than structure. We give examples of $\dagger$-Drazin inverses/categories in the following sections. We also remark that there is some redundancy in the above definition since \textbf{[$\text{D}^{\dagger}$.1]} can be expressed in other equivalent ways. 

\begin{lemma}\label{lem:D.5678} In a $\dagger$-category,  given maps $f: A \to B$ and $f^\partial: B \to A$ which together satisfy \textbf{[$\text{D}^{\dagger}$.3]} and \textbf{[$\text{D}^{\dagger}$.4]}, then \textbf{[$\text{D}^{\dagger}$.1]} is equivalent to any of the following statements: 
\begin{enumerate}[{\bf [$\text{D}^{\dagger}$.1]}]
\setcounter{enumi}{4}
\item There is a $j \in \mathbb{N}$ such that $f f^\partial (f f^\dagger)^j  = (f f^\dagger)^j$;  
\item There is a $k \in \mathbb{N}$ such that $(f f^\dagger)^k f f^\partial = (f f^\dagger)^k$;
\item There is an $m \in \mathbb{N}$ such that $f^\partial f (f^\dagger f)^m = (f^\dagger f)^m$;
\item  There is an $n \in \mathbb{N}$ such that $(f^\dagger f)^n f^\partial f = (f^\dagger f)^n$.  
\end{enumerate}
Moreover if $f$ is $\dagger$-Drazin, then {\bf [$\text{D}^{\dagger}$.5]}--{\bf [$\text{D}^{\dagger}$.8]} all hold for all $k \geq \mathsf{ind}^\partial(f)$. 
\end{lemma}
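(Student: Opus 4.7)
The plan is to establish the chain \textbf{[$\text{D}^{\dagger}$.5]} $\Leftrightarrow$ \textbf{[$\text{D}^{\dagger}$.6]} $\Leftrightarrow$ \textbf{[$\text{D}^{\dagger}$.7]} $\Leftrightarrow$ \textbf{[$\text{D}^{\dagger}$.8]} and then show that \textbf{[$\text{D}^{\dagger}$.1]} is equivalent to this common condition (once indices are allowed to grow). The key observations are that $(ff^\dagger)^k$ and $(f^\dagger f)^k$ are self-adjoint for all $k$, and that $f^\dagger(ff^\dagger)^{k} = (f^\dagger f)^{k} f^\dagger$ (and dually $(ff^\dagger)^k f = f(f^\dagger f)^k$).

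First I would show \textbf{[$\text{D}^{\dagger}$.5]} $\Leftrightarrow$ \textbf{[$\text{D}^{\dagger}$.6]} by taking daggers: applying $\dagger$ to $(ff^\dagger)^k ff^\partial = (ff^\dagger)^k$ and using \textbf{[$\text{D}^{\dagger}$.3]} together with the self-adjointness of $(ff^\dagger)^k$ yields $ff^\partial (ff^\dagger)^k = (ff^\dagger)^k$. By the same mechanism, \textbf{[$\text{D}^{\dagger}$.4]} makes \textbf{[$\text{D}^{\dagger}$.7]} $\Leftrightarrow$ \textbf{[$\text{D}^{\dagger}$.8]}. To cross between the pairs, I would left-multiply \textbf{[$\text{D}^{\dagger}$.6]} by $f^\dagger$ and right-multiply by $f$, then push $f^\dagger$ through $(ff^\dagger)^k$ to convert it to $(f^\dagger f)^k f^\dagger$; this computes to $(f^\dagger f)^{k+1} f^\partial f = (f^\dagger f)^{k+1}$, which is \textbf{[$\text{D}^{\dagger}$.8]} with $n = k+1$. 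Conversely, left-multiplying \textbf{[$\text{D}^{\dagger}$.7]} by $f$ and right-multiplying by $f^\dagger$ gives $ff^\partial (ff^\dagger)^{n+1} = (ff^\dagger)^{n+1}$, i.e., \textbf{[$\text{D}^{\dagger}$.5]} with $j = n+1$. This closes the loop of equivalences among \textbf{[$\text{D}^{\dagger}$.5]}--\textbf{[$\text{D}^{\dagger}$.8]}.

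For the equivalence with \textbf{[$\text{D}^{\dagger}$.1]}, the implication \textbf{[$\text{D}^{\dagger}$.1]} $\Rightarrow$ \textbf{[$\text{D}^{\dagger}$.6]} $\wedge$ \textbf{[$\text{D}^{\dagger}$.7]} is immediate. For the converse, I would first observe the \emph{upward propagation property}: if $(ff^\dagger)^{k_0} ff^\partial = (ff^\dagger)^{k_0}$ holds, then left-multiplying by $ff^\dagger$ gives the same relation for $k_0 + 1$, and analogously for the other three equations. Hence, given \textbf{[$\text{D}^{\dagger}$.6]} with index $k_1$ and \textbf{[$\text{D}^{\dagger}$.7]} with index $k_2$, both relations hold at $k := \max(k_1, k_2)$, recovering \textbf{[$\text{D}^{\dagger}$.1]}. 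Since any one of \textbf{[$\text{D}^{\dagger}$.5]}--\textbf{[$\text{D}^{\dagger}$.8]} entails the other three at indices bounded by it plus one, we indeed get the full equivalence with \textbf{[$\text{D}^{\dagger}$.1]}.

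Finally, for the ``moreover'' clause, I would note that \textbf{[$\text{D}^{\dagger}$.1]} at $k = \mathsf{ind}^\partial(f)$ directly produces \textbf{[$\text{D}^{\dagger}$.6]} and \textbf{[$\text{D}^{\dagger}$.7]} at that index, and hence \textbf{[$\text{D}^{\dagger}$.5]} and \textbf{[$\text{D}^{\dagger}$.8]} at that same index via the dagger argument (no $+1$ penalty is incurred here since we are passing between \textbf{[$\text{D}^{\dagger}$.5]}/\textbf{[$\text{D}^{\dagger}$.6]} or \textbf{[$\text{D}^{\dagger}$.7]}/\textbf{[$\text{D}^{\dagger}$.8]}, not across the pairs). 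Upward propagation then extends this to all $k \geq \mathsf{ind}^\partial(f)$. The only subtle step is ensuring that the index shift $k \mapsto k+1$ incurred by the cross-pair passage does not damage the ``moreover'' statement; this is exactly why the final statement must be read as equivalence of existential statements in the first part, and why the direct dagger argument (rather than the cross-pair argument) is used to obtain the sharp bound in the moreover clause.
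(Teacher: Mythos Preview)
Your proof is correct and follows essentially the same strategy as the paper. The paper proves a single cycle \textbf{[$\text{D}^{\dagger}$.5]} $\Rightarrow$ \textbf{[$\text{D}^{\dagger}$.6]} $\Rightarrow$ \textbf{[$\text{D}^{\dagger}$.7]} $\Rightarrow$ \textbf{[$\text{D}^{\dagger}$.8]} $\Rightarrow$ \textbf{[$\text{D}^{\dagger}$.5]}, using the dagger together with \textbf{[$\text{D}^{\dagger}$.3]}/\textbf{[$\text{D}^{\dagger}$.4]} at each step (the cross-pair steps incurring a $+1$ on the index), whereas you first establish the within-pair equivalences \textbf{[$\text{D}^{\dagger}$.5]} $\Leftrightarrow$ \textbf{[$\text{D}^{\dagger}$.6]} and \textbf{[$\text{D}^{\dagger}$.7]} $\Leftrightarrow$ \textbf{[$\text{D}^{\dagger}$.8]} at the \emph{same} index via the dagger, and then cross between the pairs by multiplying on both sides by $f^\dagger$ and $f$ (using $f^\dagger(ff^\dagger)^k = (f^\dagger f)^k f^\dagger$) rather than by taking daggers. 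Your organization has the minor advantage that it cleanly isolates which implications preserve the index and which shift it by one, making the ``moreover'' argument more transparent; otherwise the two proofs are interchangeable.
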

\begin{proof} We will first explain why {\bf [$\text{D}^{\dagger}$.5]}--{\bf [$\text{D}^{\dagger}$.8]} are all equivalent. For {\bf [$\text{D}^{\dagger}$.5]} $\Rightarrow$ {\bf [$\text{D}^{\dagger}$.6]}, suppose we have that $f f^\partial (f f^\dagger)^j  = (f f^\dagger)^j$ for some $j$. Then setting $k=j$, we compute that: 
\begin{gather*}
 (f f^\dagger)^j f f^\partial \stackrel{\text{\textbf{[$\text{D}^{\dagger}$.3]}}}{=} (f f^\dagger)^j (f f^\partial)^\dagger = \left( (f f^\dagger)^\dagger \right)^j  (f f^\partial)^\dagger =  \left( f f^\partial (f f^\dagger)^j  \right)^\dagger \stackrel{\text{\textbf{[$\text{D}^{\dagger}$.5]}}}{=} \left( (f f^\dagger)^\dagger \right)^j = (f f^\dagger)^j
\end{gather*}
For {\bf [$\text{D}^{\dagger}$.6]} $\Rightarrow$ {\bf [$\text{D}^{\dagger}$.7]}, suppose we have that $(f f^\dagger)^k f f^\partial = (f f^\dagger)^k$ for some $k$. Setting $m=k+1$, we compute that: 
\begin{gather*}
f^\partial f (f^\dagger f)^{k+1} \stackrel{\text{\textbf{[$\text{D}^{\dagger}$.4]}}}{=} (f^\partial f)^\dagger (f^\dagger f)^{k+1} = (f^\partial f)^\dagger \left( (f^\dagger f)^\dagger \right)^{k+1} = \left( (f^\dagger f)^{k+1} f^\partial f \right)^\dagger = \left( f^\dagger (f f^\dagger)^{k} f f^\partial f \right)^\dagger \\
\stackrel{\text{\textbf{[$\text{D}^{\dagger}$.6]}}}{=} \left( f^\dagger (f f^\dagger)^{k} f \right)^\dagger = \left( (f^\dagger f)^{k+1}  \right)^\dagger = \left( (f^\dagger f)^\dagger \right)^{k+1} = (f^\dagger f)^{k+1} 
\end{gather*}
For the remaining parts, {\bf [$\text{D}^{\dagger}$.7]} $\Rightarrow$ {\bf [$\text{D}^{\dagger}$.8]} is proven is similar fashion to how {\bf [$\text{D}^{\dagger}$.5]} $\Rightarrow$ {\bf [$\text{D}^{\dagger}$.6]} was proved (though using \textbf{[$\text{D}^{\dagger}$.4]} instead), while {\bf [$\text{D}^{\dagger}$.8]} $\Rightarrow$ {\bf [$\text{D}^{\dagger}$.5]} is proven similar fashion to how we showed {\bf [$\text{D}^{\dagger}$.6]} $\Rightarrow$ {\bf [$\text{D}^{\dagger}$.7]} (but where we use \textbf{[$\text{D}^{\dagger}$.3]} instead). Next note that {\bf [$\text{D}^{\dagger}$.1]} is precisely \textbf{[$\text{D}^{\dagger}$.6]} and \textbf{[$\text{D}^{\dagger}$.7]} together, and therefore is indeed equivalent to any of {\bf [$\text{D}^{\dagger}$.5]}--{\bf [$\text{D}^{\dagger}$.8]}. Lastly, it is straightforward to check that if $f$ is $\dagger$-Drazin, then {\bf [$\text{D}^{\dagger}$.5]}--{\bf [$\text{D}^{\dagger}$.8]} all hold for all $k \geq \mathsf{ind}^\partial(f)$.
\end{proof}

\begin{corollary}  In a $\dagger$-category,  for a map $f: A \to B$, a map of dual type $f^\partial: B \to A$ is the $\dagger$-Drazin inverse of $f$ if and only if $f$ and $f^\partial$ satisfies \textbf{[$\text{D}^{\dagger}$.2]}, \textbf{[$\text{D}^{\dagger}$.3]}, \textbf{[$\text{D}^{\dagger}$.4]}, and any one of {\bf [$\text{D}^{\dagger}$.5]}--{\bf [$\text{D}^{\dagger}$.8]}. 
\end{corollary}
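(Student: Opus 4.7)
The plan is to observe that this corollary is an essentially immediate consequence of Lemma \ref{lem:D.5678}, so the proof should be a short deduction rather than a new argument. First I would recall that by Definition \ref{def:dagger-Drazin}, $f^\partial$ is the $\dagger$-Drazin inverse of $f$ precisely when the four conditions \textbf{[$\text{D}^{\dagger}$.1]}, \textbf{[$\text{D}^{\dagger}$.2]}, \textbf{[$\text{D}^{\dagger}$.3]}, \textbf{[$\text{D}^{\dagger}$.4]} all hold, and that uniqueness is guaranteed by Proposition \ref{prop:dag-Draz-unique}.

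For the forward direction, assume $f^\partial$ is the $\dagger$-Drazin inverse of $f$. Then \textbf{[$\text{D}^{\dagger}$.2]}, \textbf{[$\text{D}^{\dagger}$.3]}, \textbf{[$\text{D}^{\dagger}$.4]} hold directly from the definition, and the ``moreover'' clause of Lemma \ref{lem:D.5678} gives that each of \textbf{[$\text{D}^{\dagger}$.5]}--\textbf{[$\text{D}^{\dagger}$.8]} holds (indeed for all $k \geq \mathsf{ind}^\partial(f)$), so in particular any single one of them holds.

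For the converse, suppose $f$ and $f^\partial$ satisfy \textbf{[$\text{D}^{\dagger}$.2]}, \textbf{[$\text{D}^{\dagger}$.3]}, \textbf{[$\text{D}^{\dagger}$.4]}, and some chosen condition among \textbf{[$\text{D}^{\dagger}$.5]}--\textbf{[$\text{D}^{\dagger}$.8]}. Since \textbf{[$\text{D}^{\dagger}$.3]} and \textbf{[$\text{D}^{\dagger}$.4]} are in hand, the equivalence of \textbf{[$\text{D}^{\dagger}$.5]}--\textbf{[$\text{D}^{\dagger}$.8]} established in Lemma \ref{lem:D.5678} yields all four of these conditions simultaneously. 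In particular both \textbf{[$\text{D}^{\dagger}$.6]} and \textbf{[$\text{D}^{\dagger}$.7]} hold, and their conjunction is exactly \textbf{[$\text{D}^{\dagger}$.1]} (possibly after taking the maximum of the two witnessing exponents, since if the equation holds at some exponent it also holds at any larger exponent by the same Lemma). Therefore all four of \textbf{[$\text{D}^{\dagger}$.1]}--\textbf{[$\text{D}^{\dagger}$.4]} hold, so $f^\partial$ is the $\dagger$-Drazin inverse of $f$.

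There is no real obstacle here: the entire content is already packaged in Lemma \ref{lem:D.5678}, and the only minor point to be careful about is that \textbf{[$\text{D}^{\dagger}$.1]} asks for a single $k$ witnessing two equations simultaneously, whereas \textbf{[$\text{D}^{\dagger}$.6]} and \textbf{[$\text{D}^{\dagger}$.7]} individually supply possibly different exponents. This is handled by noting that these conditions are upward-closed in the exponent, so taking the larger of the two witnesses gives a common $k$.
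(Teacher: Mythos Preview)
Your proposal is correct and matches the paper's approach: the corollary is stated without proof in the paper, being an immediate consequence of Lemma \ref{lem:D.5678}, and your write-up simply unpacks that deduction. The only extra care you take---about aligning the two exponents from \textbf{[$\text{D}^{\dagger}$.6]} and \textbf{[$\text{D}^{\dagger}$.7]} to get a single $k$ for \textbf{[$\text{D}^{\dagger}$.1]}---is a reasonable detail the paper leaves implicit.
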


In any $\dagger$-category, there are certain maps which are $\dagger$-Drazin by default: 

\begin{lemma}\label{lem:dag-Draz-maps} In a $\dagger$-category, 
    \begin{enumerate}[(i)]
    \item Identity maps $1_A$ are $\dagger$-Drazin where $1_A^\partial = 1_A$ and $\mathsf{ind}^\partial(1_A)=0$; 
\item \label{lem:dag-Draz-iso} If $f$ is an isomorphism then $f$ is $\dagger$-Drazin where $f^\partial = f^{-1}$ and $\mathsf{ind}^\partial(f)=0$;
\item \label{lem:dag-Draz-partial-isom} If $f$ is a partial isometry (i.e. $ff^\dagger f = f$) then $f$ is $\dagger$-Drazin where $f^\partial = f^\dagger$ and $\mathsf{ind}^\partial(f)\leq 1$; 
\item \label{lem:dag-Draz-unitary} If $f$ is unitary (i.e. $ff^\dagger =1$ and $f^\dagger f=1$) then $f$ is $\dagger$-Drazin where $f^\partial = f^\dagger$ and $\mathsf{ind}^\partial(f)=0$;
\item \label{lem:dag-Draz-dag-idempotent} If $e$ is a $\dagger$-idempotent (i.e. $ee=e$ and $e^\dagger =e$) then $e$ is $\dagger$-Drazin where $e^\partial =e$.  Furthermore, when $e \neq 1_A$, $\mathsf{ind}^\partial(e)=1$ and $\mathsf{ind}^\partial(e)=0$ precisely when $e=1_A$. 
\end{enumerate}
\end{lemma}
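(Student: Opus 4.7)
The plan is to verify the axioms \textbf{[$\text{D}^\dagger$.1]}--\textbf{[$\text{D}^\dagger$.4]} directly for each candidate $f^\partial$. Since the proposed $f^\partial$ in each case is built either from $1_A$, $f^{-1}$, $f^\dagger$, or $e$ itself, the verification will mostly reduce to applying the defining equation of the class of maps considered (invertibility, $ff^\dagger f = f$, $ff^\dagger=f^\dagger f = 1$, or $ee=e$ with $e^\dagger=e$), together with the observation that identity maps and $\dagger$-idempotents are self-adjoint, so \textbf{[$\text{D}^\dagger$.3]} and \textbf{[$\text{D}^\dagger$.4]} are usually immediate.

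For (i), I would take $f^\partial = 1_A$: then $ff^\partial = f^\partial f = 1_A = 1_A^\dagger$ makes \textbf{[$\text{D}^\dagger$.2]}, \textbf{[$\text{D}^\dagger$.3]}, \textbf{[$\text{D}^\dagger$.4]} trivial, and \textbf{[$\text{D}^\dagger$.1]} with $k=0$ reads $1_A = 1_A$, so the index is $0$. For (ii), with $f^\partial = f^{-1}$, both $ff^{-1}$ and $f^{-1}f$ are identity maps, hence self-adjoint, and \textbf{[$\text{D}^\dagger$.1]} at $k=0$ reads $1 \cdot f f^{-1} = 1$, giving index $0$. Case (iv) is an instance of (ii) since a unitary is an isomorphism with inverse $f^\dagger$.

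For (iii), with $f^\partial = f^\dagger$, \textbf{[$\text{D}^\dagger$.3]} and \textbf{[$\text{D}^\dagger$.4]} hold because $\dagger$ is involutive. Axiom \textbf{[$\text{D}^\dagger$.2]} becomes $f^\dagger f f^\dagger = f^\dagger$, which is obtained by applying $\dagger$ to $ff^\dagger f = f$. For \textbf{[$\text{D}^\dagger$.1]} at $k=1$, I compute $(ff^\dagger) f f^\dagger = (ff^\dagger f)f^\dagger = f f^\dagger$, and symmetrically for the other equation, giving $\mathsf{ind}^\partial(f) \leq 1$. For (v), with $e^\partial = e$, the relations $ee=e$ and $e^\dagger = e$ immediately give all of \textbf{[$\text{D}^\dagger$.2]}--\textbf{[$\text{D}^\dagger$.4]}, while \textbf{[$\text{D}^\dagger$.1]} at $k=1$ is just $e\cdot e = e$. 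To pin down the index precisely, I would observe that \textbf{[$\text{D}^\dagger$.1]} at $k=0$ demands $e e^\partial = 1_A$, i.e.\ $e = 1_A$; so $\mathsf{ind}^\partial(e) = 0$ iff $e = 1_A$, and otherwise $\mathsf{ind}^\partial(e)=1$.

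There is no real obstacle here beyond bookkeeping; each verification is a one- or two-step computation using the defining identity and involutivity of $\dagger$. The only mildly subtle point is the index computation in (iii) and (v), where one must check not just that some $k$ works but that the claimed bound is sharp (or at least that the weaker inequality $\leq 1$ is correct for partial isometries, since a unitary is a partial isometry with index $0$).
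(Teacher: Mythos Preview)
Your proposal is correct and follows exactly the approach the paper intends: the paper's own proof reads ``These are straightforward to check, so we leave them as an exercise for the reader,'' and your verification of \textbf{[$\text{D}^{\dagger}$.1]}--\textbf{[$\text{D}^{\dagger}$.4]} in each case is precisely that exercise carried out. The index arguments in (iii) and (v) are also handled correctly.
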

\begin{proof} These are straightforward to check, so we leave them as an exercise for the reader. \end{proof}

Having a dagger-inverse has a number of consequences: 

\begin{proposition}\label{prop:dag-Draz} In a $\dagger$-category,  if a map $f$ is $\dagger$-Drazin then: 
\begin{enumerate}[(i)]
\item \label{prop:D3D4-expand} $f f^\partial =  {f^\partial}^\dagger f^\dagger$ and $f^\partial f = f^\dagger {f^\partial}^\dagger $;
\item \label{prop:D2D3D4} $f^\partial = f^\dagger {f^\partial}^\dagger f^\partial = f^\partial {f^\partial}^\dagger f^\dagger$ and  ${f^\partial}^\dagger = f f^\partial {f^\partial}^\dagger  = {f^\partial}^\dagger f^\partial  f$; 
\item \label{prop:dag-Draz-adj} $f^\dagger$ is also $\dagger$-Drazin where ${f^\dagger}^\partial = {f^\partial}^\dagger$ and $\mathsf{ind}^\partial(f) = \mathsf{ind}^\partial(f^\dagger)$;
\item \label{prop:dag-Draz-dag-Draz} $f^\partial$ is $\dagger$-Drazin where $f^{\partial \partial} = f f^\partial f$ and $\mathsf{ind}^\partial(f^\partial) \leq 1$;
\item \label{prop:dag-Draz-dag-Draz-dag-Drax} $f^{\partial \partial \partial} = f^\partial$;
\item \label{prop:dag-Draz-dag-Draz-partial-isometry} If $f^\partial = f^\dagger$ then $f$ is a partial isometry; 
\item  \label{prop:dag-Draz-dag-Draz-iso} If $\mathsf{ind}^\partial(f)=0$ then $f$ is an isomorphism and $f^{-1} = f^\partial$;
\item \label{prop:dag-Draz-dag-Draz-unitary} If $f^\partial = f^\dagger$ and $\mathsf{ind}^\partial(f)=0$ then $f$ is unitary; 
\item \label{prop:dag-Draz-ffdag} $ff^\dagger$ and $f^\dagger f$ are $\dagger$-Drazin where $(f f^\dagger)^\partial = {f^\dagger}^\partial f^\partial$ and $(f^\dagger f)^\partial = f^\partial {f^\dagger}^\partial$;
\item \label{prop:dag-Draz-ffpartial} $ff^\partial$ and $f^\partial f$ are partial isometries and so $\dagger$-Drazin where $(ff^\partial)^\partial = ff^\partial$ and $(f^\partial f)^\partial = f^\partial f$. 
\end{enumerate}
\end{proposition}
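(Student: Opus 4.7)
The plan is to handle the ten items in waves of increasing difficulty, leveraging earlier parts as lemmas for later ones. Parts (i) and (ii) are essentially restatements: (i) is just \textbf{[$\text{D}^{\dagger}$.3]} and \textbf{[$\text{D}^{\dagger}$.4]} expanded out (i.e.\ $ff^\partial=(ff^\partial)^\dagger={f^\partial}^\dagger f^\dagger$), while (ii) is obtained by substituting (i) into the two sides of $f^\partial f f^\partial = f^\partial$ from \textbf{[$\text{D}^{\dagger}$.2]} and then taking daggers to get the corresponding statements for ${f^\partial}^\dagger$. Part (iii) is a routine dualization: applying $(-)^\dagger$ to each of \textbf{[$\text{D}^{\dagger}$.2]}--\textbf{[$\text{D}^{\dagger}$.4]} turns the axiom for $f$ with inverse $f^\partial$ into the axiom for $f^\dagger$ with inverse ${f^\partial}^\dagger$, and the two equations of \textbf{[$\text{D}^{\dagger}$.1]} for $f$ dualize into the pair of equations of \textbf{[$\text{D}^{\dagger}$.1]} for $f^\dagger$ with the same minimal $k$. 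Parts (vi), (vii), (viii) are short specializations: (vi) follows by substituting $f^\partial = f^\dagger$ into \textbf{[$\text{D}^{\dagger}$.2]} and taking a dagger; (vii) uses the convention $(xx^\dagger)^0 = 1$ so that the $k=0$ case of \textbf{[$\text{D}^{\dagger}$.1]} reads $ff^\partial = 1_B$ and $f^\partial f = 1_A$; and (viii) is the combination of (vi) and (vii).

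For (iv), I take the candidate $f^{\partial\partial} := ff^\partial f$. Axiom \textbf{[$\text{D}^{\dagger}$.2]} for $f^\partial$ is $(ff^\partial f)\cdot f^\partial\cdot(ff^\partial f) = ff^\partial f$, which collapses by two applications of $f^\partial f f^\partial = f^\partial$. For \textbf{[$\text{D}^{\dagger}$.3]} I compute $f^\partial f^{\partial\partial} = f^\partial f f^\partial f = f^\partial f$ (again by \textbf{[$\text{D}^{\dagger}$.2]}), which is self-adjoint by \textbf{[$\text{D}^{\dagger}$.4]}; \textbf{[$\text{D}^{\dagger}$.4]} for $f^\partial$ is symmetric via \textbf{[$\text{D}^{\dagger}$.3]}. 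For \textbf{[$\text{D}^{\dagger}$.1]} at $k=1$, I expand $f^\partial{f^\partial}^\dagger f^\partial\cdot ff^\partial f$, use \textbf{[$\text{D}^{\dagger}$.2]} to collapse the middle, and then apply (ii) to recognize ${f^\partial}^\dagger f^\partial f = {f^\partial}^\dagger f^\partial$; the symmetric equation is analogous. Then (v) is immediate from (iv) applied once more to $f^\partial$: $f^{\partial\partial\partial} = f^\partial f^{\partial\partial} f^\partial = f^\partial(ff^\partial f)f^\partial$, which two uses of \textbf{[$\text{D}^{\dagger}$.2]} collapse to $f^\partial$.

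For (x), observe that $(ff^\partial)(ff^\partial) = f(f^\partial f f^\partial) = ff^\partial$ by \textbf{[$\text{D}^{\dagger}$.2]} and $(ff^\partial)^\dagger = ff^\partial$ by \textbf{[$\text{D}^{\dagger}$.3]}, so $ff^\partial$ is a $\dagger$-idempotent; Lemma \ref{lem:dag-Draz-maps}.(\ref{lem:dag-Draz-dag-idempotent}) then gives $(ff^\partial)^\partial = ff^\partial$, and any $\dagger$-idempotent is trivially a partial isometry; $f^\partial f$ is symmetric. The main obstacle is (ix), where I propose $(ff^\dagger)^\partial := {f^\partial}^\dagger f^\partial$ and must check all four axioms plus the index condition. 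The workhorse identities are the two reformulations from (i), namely $f^\dagger{f^\partial}^\dagger = f^\partial f$ (by \textbf{[$\text{D}^{\dagger}$.4]}) and ${f^\partial}^\dagger f^\dagger = ff^\partial$ (by \textbf{[$\text{D}^{\dagger}$.3]}), together with (ii). Using them, $ff^\dagger\cdot{f^\partial}^\dagger f^\partial = f(f^\dagger{f^\partial}^\dagger)f^\partial = f(f^\partial f)f^\partial = ff^\partial$, and dually ${f^\partial}^\dagger f^\partial\cdot ff^\dagger$ also collapses to $ff^\partial$; this gives \textbf{[$\text{D}^{\dagger}$.3]} for $ff^\dagger$, and \textbf{[$\text{D}^{\dagger}$.2]} and \textbf{[$\text{D}^{\dagger}$.4]} fall out by similar repeated collapses. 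Finally \textbf{[$\text{D}^{\dagger}$.1]} for $ff^\dagger$ reduces via the same identities to a power of the corresponding equation for $f$, yielding an index bound. The case of $f^\dagger f$ is handled dually (or by combining with (iii)). I expect (ix) to be the one requiring the most careful bookkeeping; everything else flows mechanically from (i), (ii), and the four axioms.
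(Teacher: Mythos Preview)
Your proposal is correct and essentially matches the paper's argument item by item: (i)--(ii) are expansions of the axioms, (iii) is dualization, (iv)--(v) use the candidate $ff^\partial f$ and collapse via \textbf{[$\text{D}^{\dagger}$.2]}, (vi)--(viii) are immediate specializations, and (x) is the observation that $ff^\partial$ is a self-adjoint idempotent (the paper phrases this as a partial isometry check, but that is the same thing).

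The only real divergence is (ix). The paper does \emph{not} verify the $\dagger$-Drazin axioms for $ff^\dagger$ directly inside this proposition; it defers the claim to Theorem~\ref{thm:Drazin=dag-Drazin}, where it instead checks that ${f^\partial}^\dagger f^\partial$ satisfies the ordinary Drazin axioms \textbf{[D.1]}--\textbf{[D.3]} for $ff^\dagger$, and then invokes the self-adjoint correspondence (Lemma~\ref{lem:self-adjoint}) to conclude $(ff^\dagger)^\partial=(ff^\dagger)^D$. Your direct verification of \textbf{[$\text{D}^{\dagger}$.1]}--\textbf{[$\text{D}^{\dagger}$.4]} for $ff^\dagger$ works perfectly well (your key reductions $ff^\dagger\cdot{f^\partial}^\dagger f^\partial = ff^\partial = {f^\partial}^\dagger f^\partial\cdot ff^\dagger$ via (i) and (ii) are exactly the same identities the paper uses for \textbf{[D.1]}--\textbf{[D.3]}), and it has the advantage of being self-contained within the proposition rather than forward-referencing. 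The paper's route buys a unified treatment with the converse implication $(iii)\Rightarrow(i)$ of Theorem~\ref{thm:Drazin=dag-Drazin}, which is why it is packaged there.
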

\begin{proof} For (\ref{prop:D3D4-expand}), this is just expanding out \textbf{[$\text{D}^{\dagger}$.3]} and \textbf{[$\text{D}^{\dagger}$.4]} expanded out. For (\ref{prop:D2D3D4}), this follows from \textbf{[$\text{D}^{\dagger}$.2]} and (\ref{prop:D3D4-expand}). For (\ref{prop:dag-Draz-adj}), suppose that $\mathsf{ind}^\partial(f)=k$. We show that ${f^\dagger}^\partial := {f^\partial}^\dagger$ satisfies the necessary axioms to be a $\dagger$-Drazin inverse of $f^\dagger$: 
\begin{enumerate}[{\bf [$\text{D}^{\dagger}$.1]}]
\setcounter{enumi}{5}
\item $(f^\dagger f)^k f^\dagger {f^\dagger}^\partial \stackrel{\text{def.}}{=}  (f^\dagger f)^k f^\dagger {f^\partial}^\dagger \stackrel{\text{(\ref{prop:D3D4-expand})}}{=}(f^\dagger f)^k f^\partial f \stackrel{\text{\textbf{[$\text{D}^{\dagger}$.8]}}}{=} (f^\dagger f)^k$
\end{enumerate}
    \begin{enumerate}[{\bf [$\text{D}^{\dagger}$.1]}]
\setcounter{enumi}{1}
\item ${f^\dagger}^\partial f^\dagger {f^\dagger}^\partial \stackrel{\text{def.}}{=} {f^\partial}^\dagger f^\dagger {f^\partial}^\dagger = \left( f^\partial f f^\partial \right)^\dagger \stackrel{\text{\textbf{[$\text{D}^{\dagger}$.2]}}}{=} {f^\partial}^\dagger \stackrel{\text{def.}}{=} {f^\dagger}^\partial$
\item $\left(f^\dagger {f^\dagger}^\partial\right)^\dagger \stackrel{\text{def.}}{=} \left( f^\dagger {f^\partial}^\dagger\right)^\dagger \stackrel{\text{(\ref{prop:D3D4-expand})}}{=} (f^\partial f)^{\dagger}  \stackrel{\text{\textbf{[$\text{D}^{\dagger}$.4]}}}{=} f^\partial f \stackrel{\text{(\ref{prop:D3D4-expand})}}{=} f^\dagger {f^\partial}^\dagger \stackrel{\text{def.}}{=} f^\dagger {f^\dagger}^\partial$
\item $\left( {f^\dagger}^\partial f^\dagger\right)^\dagger \stackrel{\text{def.}}{=} \left(  {f^\partial}^\dagger f^\dagger\right)^\dagger \stackrel{\text{(\ref{prop:D3D4-expand})}}{=} (f f^\partial)^\dagger  \stackrel{\text{\textbf{[$\text{D}^{\dagger}$.3]}}}{=} f f^\partial  \stackrel{\text{(\ref{prop:D3D4-expand})}}{=}  {f^\partial}^\dagger f^\dagger \stackrel{\text{def.}}{=}  {f^\dagger}^\partial f^\dagger$
\end{enumerate}
So we conclude that $f^\dagger$ is $\dagger$-Drazin. It is straightforward to check that $\mathsf{ind}^\partial(f) = \mathsf{ind}^\partial(f^\dagger)$. For (\ref{prop:dag-Draz-dag-Draz}), we show that $f^{\partial \partial} := f f^\partial f$ satisfies the necessary axioms to be $\dagger$-Drazin inverse of $f^\partial$. 
\begin{enumerate}[{\bf [$\text{D}^{\dagger}$.1]}]
\setcounter{enumi}{5}
\item $f^\partial {f^\partial}^\dagger f^\partial f^{\partial \partial} \stackrel{\text{def.}}{=}  f^\partial  {f^\partial}^\dagger f^\partial f f^\partial f \stackrel{\text{(\ref{prop:D3D4-expand})}}{=} 
    f^\partial {f^\partial}^\dagger f^\dagger {f^\partial}^\dagger =  f^\partial (f^\partial f f^\partial)^\dagger \stackrel{\text{\textbf{[$\text{D}^{\dagger}$.2]}}}{=}  f^\partial {f^\partial}^\dagger $
\end{enumerate}
\begin{enumerate}[{\bf [$\text{D}^{\dagger}$.1]}]
\setcounter{enumi}{1}
\item $f^{\partial \partial}  f^\partial  f^{\partial \partial} \stackrel{\text{def.}}{=} f f^\partial f f^\partial f f^\partial f \stackrel{\text{\textbf{[$\text{D}^{\dagger}$.2]}}}{=} f f^\partial f f^\partial f \stackrel{\text{\textbf{[$\text{D}^{\dagger}$.2]}}}{=} f f^\partial f \stackrel{\text{def.}}{=}  f^{\partial \partial}$
\item $(f^\partial f^{\partial \partial})^\dagger \stackrel{\text{def.}}{=} (f^\partial f f^\partial f)^\dagger \stackrel{\text{\textbf{[$\text{D}^{\dagger}$.2]}}}{=} (f^\partial f)^\dagger \stackrel{\text{\textbf{[$\text{D}^{\dagger}$.4]}}}{=} f^\partial f \stackrel{\text{\textbf{[$\text{D}^{\dagger}$.2]}}}{=} f^\partial f f^\partial f \stackrel{\text{def.}}{=} f^\partial f^{\partial \partial}$
\item $(f^{\partial \partial} f^\partial)^\dagger \stackrel{\text{def.}}{=} ( f f^\partial f f^\partial)^\dagger \stackrel{\text{\textbf{[$\text{D}^{\dagger}$.2]}}}{=} (f f^\partial )^\dagger \stackrel{\text{\textbf{[$\text{D}^{\dagger}$.3]}}}{=} f f^\partial  \stackrel{\text{\textbf{[$\text{D}^{\dagger}$.2]}}}{=} f f^\partial f f^\partial  \stackrel{\text{def.}}{=}  f^{\partial \partial} f^\partial$
\end{enumerate}
So we conclude that $f^{\partial \partial} = f f^\partial f$ is the $\dagger$-Drazin inverse of $f^\partial$. It easy to check that ${\mathsf{ind}^\partial(f^\partial) \leq 1}$. Now for (\ref{prop:dag-Draz-dag-Draz-dag-Drax}), applying (\ref{prop:dag-Draz-dag-Draz}) to $f^\partial$ we get that $f^{\partial \partial}$ is also $\dagger$-Drazin where $f^{\partial \partial\partial} = f^\partial f^{\partial \partial} f^\partial$. Expanding this out further, we compute that $f^{\partial \partial\partial} \stackrel{\text{def.}}{=} f^\partial f^{\partial \partial} f^\partial \stackrel{\text{def.}}{=}  f^\partial f f^\partial f f^\partial \stackrel{\text{\textbf{[$\text{D}^{\dagger}$.2]}}}{=}  f^\partial f f^\partial \stackrel{\text{\textbf{[$\text{D}^{\dagger}$.2]}}}{=} f^\partial$. So $f^\partial$ is the $\dagger$-Drazin inverse of $f^{\partial \partial}$. For (\ref{prop:dag-Draz-dag-Draz-partial-isometry}), suppose that $f^\partial = f^\dagger$. Then \textbf{[$\text{D}^{\dagger}$.2]} tells us that $f^\dagger f f^\dagger = f^\dagger$, which says that $f^\dagger$ is a partial isometry. Then applying the dagger to this equality gives $f f^\dagger f = f$, and so we have that $f$ is a partial isometry. For (\ref {prop:dag-Draz-dag-Draz-iso}), suppose that $\mathsf{ind}^\partial(f)=0$. This means that $ff^\partial = 1$ and $f^\partial f=1$, which is precisely that $f$ is an isomorphism with inverse $f^{-1} = f^\partial$. Then (\ref{prop:dag-Draz-dag-Draz-unitary}) immediately follows from (\ref{prop:dag-Draz-dag-Draz-partial-isometry}) and (\ref {prop:dag-Draz-dag-Draz-iso}). We will prove (\ref{prop:dag-Draz-ffdag}) in Thm \ref{thm:Drazin=dag-Drazin}. For (\ref{prop:dag-Draz-ffpartial}), note that we can easily check that $f f^\partial (f f^\partial)^\dagger f f^\partial \stackrel{\text{\textbf{[$\text{D}^{\dagger}$.3]}}}{=}  f f^\partial f f^\partial f f^\partial \stackrel{\text{\textbf{[$\text{D}^{\dagger}$.2]}}}{=}  f f^\partial f f^\partial \stackrel{\text{\textbf{[$\text{D}^{\dagger}$.2]}}}{=} f f^\partial$, and similarly that $f^\partial f ( f^\partial f)^\dagger  f^\partial f = f^\partial f$. So both $f f^\partial$ and $f^\partial f$ are partial isometries, so by Lemma \ref{lem:dag-Draz-maps}.(\ref{lem:dag-Draz-partial-isom}), they are both $\dagger$-Drazin and their adjoints are their $\dagger$-Drazin inverse. However by \textbf{[$\text{D}^{\dagger}$.3]} and \textbf{[$\text{D}^{\dagger}$.4]}, we get $(ff^\partial)^\partial = ff^\partial$ and $(f^\partial f)^\partial = f^\partial f$. 
\end{proof}

Unfortunately, like other generalized inverses (such as the Drazin inverse and the Moore-Penrose inverse), $\dagger$-Drazin inverses do not necessarily play well with composition. Indeed, even if $f$ and $g$ are $\dagger$-Drazin, $fg$ is may not be $\dagger$-Drazin, and even if it was, $(fg)^\partial$ may not equal $g^\partial f^\partial$. 

We conclude this section by discussing the special case of having a $\dagger$-Drazin index less than or equal to $1$. In classical Drazin inverse theory, having Drazin index less than or equal to $1$ corresponds precisely to another special kind of generalized inverse called a \emph{group inverse} \cite[Def 7.2.4]{campbell2009generalized} -- which can be defined without explicitly mentioning Drazin inverses. Borrowing this terminology, we introduce the notion of a \emph{$\dagger$-group inverse}. 

\begin{definition}
    In a $\dagger$-category, a {\bf $\dagger$-group inverse} of a map $f: A \to B$ is a map of dual type $f^\partial: B \to A$ such that:
\begin{enumerate}[{\bf [$\text{G}^{\dagger}$.{1}.a]}]
\item $ff^\dagger ff^\partial = ff^\dagger$ 
\item $f^\partial f f^\dagger f = f^\dagger f$
\end{enumerate}
\begin{enumerate}[{\bf [$\text{G}^{\dagger}$.1]}]
\setcounter{enumi}{1}
\item $f^\partial f f^\partial = f^\partial$; 
\item $(ff^\partial)^\dagger = ff^\partial$;
\item $(f^\partial f)^\dagger = f^\partial f$.
\end{enumerate}
\end{definition}

\begin{lemma}\label{Lem:dagger-group inverse}
    In a $\dagger$-category, a map $f: A \to B$ has a $\dagger$-group inverse if and only if $f$ is $\dagger$-Drazin with $\mathsf{ind}^\partial(f) \leq 1$. Moreover, in this case, the $\dagger$-group inverse coincides with the $\dagger$-Drazin inverse, and the following equalities also hold:
    \begin{enumerate}[{\bf [$\text{G}^{\dagger}$.{1}.a]}]
    \setcounter{enumi}{2}
\item $ff^\partial ff^\dagger  = ff^\dagger$ 
\item $f^\dagger f f^\partial f  = f^\dagger f$
\end{enumerate}
\end{lemma}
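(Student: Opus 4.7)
The plan is essentially to observe that the $\dagger$-group inverse axioms are almost literally the $\dagger$-Drazin axioms specialised to the case $k=1$, and then to invoke Lemma \ref{lem:D.5678} to close the loop in both directions.

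First I would note that axioms \textbf{[$\text{G}^{\dagger}$.2]}, \textbf{[$\text{G}^{\dagger}$.3]}, \textbf{[$\text{G}^{\dagger}$.4]} are verbatim \textbf{[$\text{D}^{\dagger}$.2]}, \textbf{[$\text{D}^{\dagger}$.3]}, \textbf{[$\text{D}^{\dagger}$.4]}, and that \textbf{[$\text{G}^{\dagger}$.1.a]} is precisely \textbf{[$\text{D}^{\dagger}$.6]} with the choice $k=1$, while \textbf{[$\text{G}^{\dagger}$.1.b]} is precisely \textbf{[$\text{D}^{\dagger}$.7]} with the choice $m=1$. This observation makes the backbone of the argument purely bookkeeping.

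For the ``only if'' direction, suppose $f^\partial$ is a $\dagger$-group inverse of $f$. Since \textbf{[$\text{G}^{\dagger}$.3]} and \textbf{[$\text{G}^{\dagger}$.4]} give us \textbf{[$\text{D}^{\dagger}$.3]} and \textbf{[$\text{D}^{\dagger}$.4]}, Lemma \ref{lem:D.5678} applies; then either of \textbf{[$\text{G}^{\dagger}$.1.a]} or \textbf{[$\text{G}^{\dagger}$.1.b]} already witnesses \textbf{[$\text{D}^{\dagger}$.1]} with natural number $1$. Combined with \textbf{[$\text{G}^{\dagger}$.2]}, this shows $f^\partial$ is the $\dagger$-Drazin inverse of $f$, and $\mathsf{ind}^\partial(f) \leq 1$ is immediate from the defining inequality.

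For the ``if'' direction, suppose $f$ is $\dagger$-Drazin with $\mathsf{ind}^\partial(f) \leq 1$. Then \textbf{[$\text{D}^{\dagger}$.2]}, \textbf{[$\text{D}^{\dagger}$.3]}, \textbf{[$\text{D}^{\dagger}$.4]} give \textbf{[$\text{G}^{\dagger}$.2]}, \textbf{[$\text{G}^{\dagger}$.3]}, \textbf{[$\text{G}^{\dagger}$.4]}, and by the final clause of Lemma \ref{lem:D.5678} all of \textbf{[$\text{D}^{\dagger}$.5]}--\textbf{[$\text{D}^{\dagger}$.8]} hold for the natural number $1$; instantiating \textbf{[$\text{D}^{\dagger}$.6]} and \textbf{[$\text{D}^{\dagger}$.7]} at $k=m=1$ yields \textbf{[$\text{G}^{\dagger}$.1.a]} and \textbf{[$\text{G}^{\dagger}$.1.b]}. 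Simultaneously, instantiating \textbf{[$\text{D}^{\dagger}$.5]} at $j=1$ and \textbf{[$\text{D}^{\dagger}$.8]} at $n=1$ produces the two auxiliary equalities \textbf{[$\text{G}^{\dagger}$.1.c]} and \textbf{[$\text{G}^{\dagger}$.1.d]}. Uniqueness of the $\dagger$-Drazin inverse (Prop \ref{prop:dag-Draz-unique}) then forces the $\dagger$-group inverse and the $\dagger$-Drazin inverse to coincide. I do not anticipate any serious obstacle here: the entire content of the lemma is repackaging Lemma \ref{lem:D.5678} at the minimal index $1$, and no new computation with the involution is required.
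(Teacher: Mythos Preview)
Your proposal is correct and follows essentially the same approach as the paper: both recognise that \textbf{[$\text{G}^{\dagger}$.2]}--\textbf{[$\text{G}^{\dagger}$.4]} are literally \textbf{[$\text{D}^{\dagger}$.2]}--\textbf{[$\text{D}^{\dagger}$.4]}, that \textbf{[$\text{G}^{\dagger}$.1.a]}+\textbf{[$\text{G}^{\dagger}$.1.b]} is \textbf{[$\text{D}^{\dagger}$.1]} at $k=1$, and that \textbf{[$\text{G}^{\dagger}$.1.c]}, \textbf{[$\text{G}^{\dagger}$.1.d]} are \textbf{[$\text{D}^{\dagger}$.5]}, \textbf{[$\text{D}^{\dagger}$.8]} at $k=1$ via Lemma~\ref{lem:D.5678}. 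The only cosmetic difference is that you route the ``only if'' direction through Lemma~\ref{lem:D.5678} as well, whereas the paper observes directly that \textbf{[$\text{G}^{\dagger}$.1.a]} and \textbf{[$\text{G}^{\dagger}$.1.b]} together already constitute \textbf{[$\text{D}^{\dagger}$.1]} at $k=1$ without needing the lemma.
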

\begin{proof} For the $\Rightarrow$ direction, suppose that $f$ has a $\dagger$-group inverse $f^\partial$. Then we clearly see that {\bf [$\text{G}^{\dagger}$.{1}.a]} and {\bf [$\text{G}^{\dagger}$.{1}.b]} combined is {\bf [$\text{D}^{\dagger}$.1]} for $k=1$, while {\bf [$\text{G}^{\dagger}$.2]}, {\bf [$\text{G}^{\dagger}$.3]}, and {\bf [$\text{G}^{\dagger}$.4]} are precisely {\bf [$\text{D}^{\dagger}$.2]}, {\bf [$\text{D}^{\dagger}$.3]}, and {\bf [$\text{D}^{\dagger}$.4]} respectively. So $f$ is $\dagger$-Drazin with $\dagger$-Drazin being its $\dagger$-group inverse, and moreover {\bf [$\text{G}^{\dagger}$.{1}.a]} and {\bf [$\text{G}^{\dagger}$.{1}.b]} tells us precisely that $\mathsf{ind}^\partial(f) \leq 1$. For the $\Leftarrow$ direction, suppose that $f$ is $\dagger$-Drazin with $\dagger$-Drazin inverse $f^\partial$ $\mathsf{ind}^\partial(f) \leq 1$. Then clearly, {\bf [$\text{D}^{\dagger}$.2]}, {\bf [$\text{D}^{\dagger}$.3]}, and {\bf [$\text{D}^{\dagger}$.4]} are the same as {\bf [$\text{G}^{\dagger}$.2]}, {\bf [$\text{G}^{\dagger}$.3]}, and {\bf [$\text{G}^{\dagger}$.4]} respectively, while $\mathsf{ind}^\partial(f) \leq 1$ tells us that {\bf [$\text{D}^{\dagger}$.1]} holds for $k=1$ which is precisely {\bf [$\text{G}^{\dagger}$.{1}.a]} and {\bf [$\text{G}^{\dagger}$.{1}.b]}. Lastly, it follows from Lemma \ref{lem:D.5678} that {\bf [$\text{G}^{\dagger}$.{1}.c]} and {\bf [$\text{G}^{\dagger}$.{1}.d]} hold, as they are {\bf [$\text{D}^{\dagger}$.5]} and {\bf [$\text{D}^{\dagger}$.8]} for $k=1$. 
\end{proof}

Since $\dagger$-group inverses are special cases of $\dagger$-Drazin inverses, they are unique as well. Moreover, Lemma \ref{lem:dag-Draz-maps}.(\ref{lem:dag-Draz-iso}), (\ref{lem:dag-Draz-partial-isom}), and (\ref{lem:dag-Draz-dag-idempotent}) tells us that isomorphisms (in particular, identity maps and unitary maps), partial isometries, and $\dagger$-idempotents all have $\dagger$-group inverses. Moreover, observe that by combining Lemma \ref{lem:dag-Draz-maps}.(\ref{lem:dag-Draz-iso}) and Prop \ref{prop:dag-Draz}.(\ref{prop:dag-Draz-dag-Draz-iso}), we see that a map $f$ is $\dagger$-Drazin with $\mathsf{ind}^\partial(f)=0$ if and only if $f$ is an isomorphism, and in this case its $\dagger$-group inverse is its inverse, $f^\partial = f^{-1}$. As such, the maps with $\dagger$-Drazin index $1$ are precisely those that have a $\dagger$-group inverse and are \emph{not} an isomorphism. 

\section{Comparing Dagger-Drazin and Drazin}\label{sec:dag-Drazin-Drazin}

In this section, we explain the relationship between Drazin inverses and $\dagger$-Drazin inverses. In particular, the main result of this section is that having $\dagger$-Drazin inverses is equivalent to having Drazin inverses of positive maps. For a full detailed introduction to Drazin inverses in categories, see \cite{cockett2024drazin}. 

\begin{definition}  \label{def:Drazin} \cite[Def 2.1]{cockett2024drazin} In a category, a \textbf{Drazin inverse} of an endomorphism $x: A \to A$ is an endomorphism $x^D: A \to A$ such that:  
\begin{enumerate}[{\bf [$\text{D}$.1]}]
\item There is a $k \in \mathbb{N}$ such that $x^{k+1} x^D = x^k$;
\item $x^D x x^D = x^D$; 
\item $x x^D = x^D x$. 
\end{enumerate}
If $x$ has a Drazin inverse, we say that $x$ is \textbf{Drazin invertible} or simply \textbf{Drazin}. We call the smallest natural number $k$ such that \textbf{[D.1]} holds the \textbf{Drazin index of $x$}, which we denote by $\mathsf{ind}^D(x)$. A \textbf{Drazin category} is a category such that every map is Drazin. 
\end{definition}

Drazin inverses are \emph{unique} \cite[Prop 2.3]{cockett2024drazin}, so again we may speak of \emph{the} Drazin inverse of a map and that being Drazin is a property rather than structure. Many examples and properties of Drazin inverses can be found in \cite{cockett2024drazin}, including the fact that having a Drazin index less than or equal to $1$ corresponds precisely to the notion of a \emph{group inverse}: 

\begin{definition}\label{def:groupinv} \cite[Def 3.8]{cockett2024drazin} In a category, a \textbf{group inverse} of an endomorphism $x: A \to A$ is an endomorphism $x^D: A \to A$ such that:
\begin{enumerate}[{\bf [$\text{G}$.1]}]
\item $x x^D x = x$;
\item $x^D x x ^D = x^D$;
\item $x^D x = x x^D$
\end{enumerate}
\end{definition}

In \cite[Lemma 3.9]{cockett2024drazin} it is proven that $x$ is Drazin with $\mathsf{ind}(x) \leq 1$ if and only if $x$ has a group inverse . As such, a group inverse is the Drazin inverse for endomorphisms with Drazin index less than or equal to $1$. 

We now add dagger structure back into the story. Thankfully, Drazin inverses behave well in a $\dagger$-category. 

\begin{lemma}\label{lem:Draz-dag} \cite[Lemma 7.22]{cockett2024drazin} In a $\dagger$-category, an endomorphism $x$ is Drazin if and only if $x^\dagger$ is Drazin. Moreover, if $x$ is Drazin then ${x^\dagger}^D = {x^D}^\dagger$. 
\end{lemma}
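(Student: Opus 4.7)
The plan is to prove the nontrivial direction by showing that if $x^D$ is the Drazin inverse of $x$, then $(x^D)^\dagger$ satisfies the three axioms \textbf{[D.1]}, \textbf{[D.2]}, \textbf{[D.3]} for being the Drazin inverse of $x^\dagger$; the converse direction is then immediate by applying this to $x^\dagger$ and using involutivity of the dagger, which also hands us the formula ${x^\dagger}^D = {x^D}^\dagger$ for free. So the whole argument reduces to verifying the three Drazin axioms for the pair $(x^\dagger, (x^D)^\dagger)$.

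Axioms \textbf{[D.2]} and \textbf{[D.3]} are essentially just the dagger of the corresponding axioms for $x$. For \textbf{[D.2]}, take the dagger of $x^D x x^D = x^D$, using the contravariance $(fgh)^\dagger = h^\dagger g^\dagger f^\dagger$, to obtain $(x^D)^\dagger x^\dagger (x^D)^\dagger = (x^D)^\dagger$. For \textbf{[D.3]}, take the dagger of $x x^D = x^D x$ to obtain $(x^D)^\dagger x^\dagger = x^\dagger (x^D)^\dagger$.

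Axiom \textbf{[D.1]} for $x^\dagger$ requires some $k \in \mathbb{N}$ with $(x^\dagger)^{k+1}(x^D)^\dagger = (x^\dagger)^k$. A naive dagger of the equation $x^{k+1} x^D = x^k$ produces $(x^D)^\dagger (x^\dagger)^{k+1} = (x^\dagger)^k$, with the factors in the wrong order. To correct this, I would first observe that \textbf{[D.3]} for $x$ makes $x^D$ commute with $x$, and hence with every power of $x$, so $x^{k+1} x^D = x^D x^{k+1}$. Taking the dagger of this rewritten identity yields exactly the form required, with the same $k$, so in fact $\mathsf{ind}^D(x^\dagger) \leq \mathsf{ind}^D(x)$; the reverse inequality comes by symmetry, giving equality of Drazin indices.

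There is no real obstacle; the only subtle point is the noncommutativity issue in \textbf{[D.1]}, which is immediately defused by the commutativity that the Drazin axiom \textbf{[D.3]} itself guarantees. Finally, to conclude the \emph{if and only if}, I would note that the argument just given, applied to $x^\dagger$ in place of $x$, shows that if $x^\dagger$ is Drazin then so is $(x^\dagger)^\dagger = x$, which closes the biconditional and also confirms that $(x^D)^\dagger$ is indeed \emph{the} Drazin inverse of $x^\dagger$ by uniqueness of Drazin inverses.
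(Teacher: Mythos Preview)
Your proof is correct. The paper does not give its own proof of this lemma; it simply cites \cite[Lemma 7.22]{cockett2024drazin}. Your argument---applying the dagger to each Drazin axiom and using \textbf{[D.3]} to fix the order in \textbf{[D.1]}---is the standard and natural one, and it is essentially what any proof of this fact must do. The observation that $\mathsf{ind}^D(x^\dagger) = \mathsf{ind}^D(x)$ is a nice bonus that the paper does not state explicitly.
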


Let us now begin exploring the connection between Drazin inverses and $\dagger$-Drazin inverses. We first observe that for \emph{self-adjoint} maps\footnote{Note that self-adjoint maps are always endomorphisms.}, the two notions coincide. 

\begin{lemma}\label{lem:self-adjoint} In a $\dagger$-category, a self-adjoint map $x$ (that is, $x=x^\dagger$) is $\dagger$-Drazin if and only if $x$ is Drazin. Moreover, if $x$ is self-adjoint and ($\dagger$-)Drazin, then its ($\dagger$-)Drazin inverse is also self-adjoint and $x^D = x^\partial$. 
\end{lemma}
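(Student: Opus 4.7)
The plan is to exploit uniqueness of both kinds of generalized inverses, together with the adjoint-symmetry already established in the excerpt, to show that in the self-adjoint case the two notions collapse onto the same map. The key observation is that for self-adjoint $x$, the already-proved fact that the $\dagger$-Drazin (resp.\ Drazin) inverse of $x^\dagger$ equals the adjoint of the $\dagger$-Drazin (resp.\ Drazin) inverse of $x$ forces that inverse to be self-adjoint; once we have this, the axioms [D†.1]--[D†.4] and [D.1]--[D.3] are easily seen to translate into one another because $xx^\dagger$ and $x^\dagger x$ both reduce to powers of $x$.

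For the forward direction, suppose $x = x^\dagger$ is $\dagger$-Drazin with inverse $x^\partial$. First I would apply Prop \ref{prop:dag-Draz}.(\ref{prop:dag-Draz-adj}) to get that $x^\dagger$ is $\dagger$-Drazin with $(x^\dagger)^\partial = (x^\partial)^\dagger$; since $x = x^\dagger$, uniqueness (Prop \ref{prop:dag-Draz-unique}) yields $(x^\partial)^\dagger = x^\partial$, so $x^\partial$ is self-adjoint. Next, Prop \ref{prop:dag-Draz}.(\ref{prop:D3D4-expand}) gives $xx^\partial = (x^\partial)^\dagger x^\dagger = x^\partial x$, which is [D.3]. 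Axiom [D.2] is exactly [D†.2]. Finally, [D†.1] specialized with $x = x^\dagger$ reads $x^{2k+1} x^\partial = x^{2k}$, which is [D.1] at index $2k$. So $x$ is Drazin and $x^D = x^\partial$.

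For the converse, suppose $x = x^\dagger$ is Drazin with inverse $x^D$. Lemma \ref{lem:Draz-dag} gives $(x^\dagger)^D = (x^D)^\dagger$, and by uniqueness of the Drazin inverse we obtain $(x^D)^\dagger = x^D$, so $x^D$ is self-adjoint. I would now verify the four $\dagger$-Drazin axioms for $x^\partial := x^D$: [D†.2] is [D.2] verbatim; for [D†.3], compute $(xx^D)^\dagger = (x^D)^\dagger x^\dagger = x^D x = xx^D$ using self-adjointness and [D.3]; [D†.4] is symmetric. For [D†.1], starting from $x^{k+1}x^D = x^k$ (some $k$) and iterating using [D.3] (so that $x$ and $x^D$ commute), one gets $x^{n+1} x^D = x^n$ for every $n \geq k$; choosing $n = 2m$ with $2m \geq k$ yields both halves of [D†.1] after rewriting $(xx^\dagger)^m = x^{2m} = (x^\dagger x)^m$.

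The only mildly delicate point is the mismatch between the indexing conventions: $\mathsf{ind}^\partial$ is stated in terms of powers of $xx^\dagger$, whereas $\mathsf{ind}^D$ is in terms of powers of $x$, so in the self-adjoint case one direction multiplies the index by $2$ and the other direction requires us to cover all indices past a threshold via the commutativity $xx^D = x^D x$. Neither step is a genuine obstacle, so I expect the proof to be short and mostly book-keeping once the self-adjointness of $x^\partial$ (resp.\ $x^D$) has been extracted from uniqueness.
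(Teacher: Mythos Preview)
Your proposal is correct and follows essentially the same route as the paper's proof: both directions establish self-adjointness of the inverse via the adjoint-compatibility results (Prop~\ref{prop:dag-Draz}.(\ref{prop:dag-Draz-adj}) and Lemma~\ref{lem:Draz-dag}) plus uniqueness, then translate the axioms directly, handling the index mismatch by passing to a sufficiently large even power. The only cosmetic difference is that the paper cites \cite[Lemma 2.2.(i)]{cockett2024drazin} for the fact that $x^{n+1}x^D = x^n$ holds for all $n \geq \mathsf{ind}^D(x)$, whereas you derive it inline from commutativity; and your closing remarks about index conventions are extra commentary not needed for the statement as written.
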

\begin{proof} Let $x$ be self-adjoint, so $x = x^\dagger$. For $\Rightarrow$, suppose that $x$ is $\dagger$-Drazin. We will show that its $\dagger$-Drazin inverse $x^\partial$ is also its Drazin inverse. Now since $x$ is self-adjoint, we can rewrite \textbf{[$\text{D}^{\dagger}$.6]} as saying there exists a $k \in \mathbb{N}$ such that $x^{2k+1} x^\partial = x^{2k}$, which implies that \textbf{[D.1]} holds. Note that \textbf{[$\text{D}^{\dagger}$.2]} and \textbf{[D.2]} are the same. Next, observe that by Prop \ref{prop:dag-Draz}.(\ref{prop:dag-Draz-adj}), we have that ${x^\partial}^\dagger = {x^\dagger}^\partial = x^\partial$, so $x^\partial$ is self-adjoint. Therefore, we may rewrite both \textbf{[$\text{D}^{\dagger}$.3]} and \textbf{[$\text{D}^{\dagger}$.4]} as $x x^\partial = x^\partial x$, which is precisely \textbf{[D.3]}. Therefore, we conclude that $x$ is Drazin with Drazin inverse $x^D= x^\partial$. For $\Leftarrow$, suppose that $x$ is Drazin. We will show that its Drazin inverse $x^D$ is also its $\dagger$-Drazin inverse. Now if $\mathsf{ind}^D(x)=k$, \cite[Lemma 2.2.(i)]{cockett2024drazin} tells us that for all $n \geq k$, we have that $x^{n+1} x^D = x^n$. In particular this gives us that $x^{2k+1} x^D = x^{2k}$, which since $x$ is self-adjoint is precisely \textbf{[$\text{D}^{\dagger}$.6]}. As before, note that \textbf{[D.2]} and \textbf{[$\text{D}^{\dagger}$.2]} are the same. Next, by Lemma \ref{lem:Draz-dag}, since $x$ is self-adjoint, $x^D$ is also self-adjoint. Then by self-adjointness and \textbf{[D.3]}, we have that $(xx^D)^\dagger = x^D x = x x^D = (x^D x)^\dagger$, which gives both 
    \textbf{[$\text{D}^{\dagger}$.3]} and \textbf{[$\text{D}^{\dagger}$.4]}. Therefore, we conclude that $x$ is $\dagger$-Drazin with $\dagger$-Drazin inverse $x^\partial = x^D$. 
\end{proof}

An important class of self-adjoint maps are the \emph{positive} maps. Recall that in a $\dagger$-category, a \textbf{positive map} is an endomorphism $p: A \to A$ such that there exists a map $f: A \to B$ such that $p = f f^\dagger$. 

\begin{corollary}\label{cor:positive} In a $\dagger$-category, a positive map is $\dagger$-Drazin if and only if it is Drazin. 
\end{corollary}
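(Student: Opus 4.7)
The plan is to reduce the corollary directly to Lemma \ref{lem:self-adjoint}, which established the $\dagger$-Drazin/Drazin equivalence for all self-adjoint maps. Since positive maps form a subclass of self-adjoint maps, no new work is needed beyond verifying this inclusion.

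First I would unfold the definition: a positive map is an endomorphism $p: A \to A$ for which there exists $f: A \to B$ with $p = f f^\dagger$. The key observation is then that every positive map is self-adjoint, which is the one-line calculation
\[
p^\dagger = (f f^\dagger)^\dagger = (f^\dagger)^\dagger f^\dagger = f f^\dagger = p,
\]
using only that $\dagger$ is contravariant and involutive.

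Once self-adjointness is in hand, Lemma \ref{lem:self-adjoint} applies verbatim: it tells us that for any self-adjoint map, being $\dagger$-Drazin is equivalent to being Drazin, and the two generalized inverses coincide. Instantiating this at the self-adjoint map $p$ yields the corollary immediately, with the added information that when these equivalent conditions hold, $p^\partial = p^D$ and this common inverse is itself self-adjoint.

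There is no real obstacle here; the content is entirely absorbed by Lemma \ref{lem:self-adjoint}. The only thing worth emphasizing in the written-out proof is that positive maps are indeed self-adjoint, so that the reader sees the corollary is a one-step specialization rather than a separate argument.
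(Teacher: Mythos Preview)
Your proposal is correct and matches the paper's intended argument exactly: the corollary is stated immediately after Lemma~\ref{lem:self-adjoint} with no separate proof, since positive maps are self-adjoint and the lemma then applies verbatim. Your one-line verification that $p^\dagger = (ff^\dagger)^\dagger = ff^\dagger = p$ is precisely the missing link the paper leaves implicit.
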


Now for any map $f$ in a $\dagger$-category, we have two positive maps associated to it: $ff^\dagger$ and $f^\dagger f$. We will now show that $f$ being $\dagger$-Drazin is equivalent to either one of these positive maps being Drazin. 

\begin{theorem}\label{thm:Drazin=dag-Drazin}  In a $\dagger$-category,  for a map $f$, the following are equivalent: 
\begin{enumerate}[(i)]
\item $f$ is $\dagger$-Drazin;
\item $f^\dagger$ is $\dagger$-Drazin;
\item $f f^\dagger$ is ($\dagger$-)Drazin; 
\item $f^\dagger f$ is ($\dagger$-)Drazin.
\end{enumerate}
Moreover, if $f$ is $\dagger$-Drazin, then:
\begin{enumerate}[(i)]
\setcounter{enumi}{5}
\item \label{thm:vi} $f^\partial = f^\dagger (f f^\dagger)^D = (f^\dagger f)^D f^\dagger$ and ${f^\dagger}^\partial = f (f^\dagger f)^D  =  (f f^\dagger)^D f$;
\item \label{thm:vii} $(f f^\dagger)^D = {f^\dagger}^\partial f^\partial$ and $(f^\dagger f)^D = f^\partial {f^\dagger}^\partial$;
\item \label{thm:viii} $\mathsf{ind}^\partial(f) = \mathsf{max}\lbrace \mathsf{ind}^D(ff^\dagger), \mathsf{ind}^D(f^\dagger f) \rbrace$. 
\end{enumerate}
\end{theorem}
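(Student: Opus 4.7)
The plan is to exploit Corollary~\ref{cor:positive}, which tells us that the positive maps $ff^\dagger$ and $f^\dagger f$ are $\dagger$-Drazin if and only if Drazin, together with Lemma~\ref{lem:Draz-dag}, which tells us that their Drazin inverses are then automatically self-adjoint. The equivalence (i)$\Leftrightarrow$(ii) is already recorded in Prop~\ref{prop:dag-Draz}.(\ref{prop:dag-Draz-adj}), and (iii)$\Leftrightarrow$(iv) will follow by applying the main equivalence with $f^\dagger$ in place of $f$. So the substantive work is the cycle (i)$\Rightarrow$(iii) and (iii)$\Rightarrow$(i), together with the explicit formulas.

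For (i)$\Rightarrow$(iii), I would propose $(ff^\dagger)^D := {f^\partial}^\dagger f^\partial$ and verify the Drazin axioms. The key algebraic trick is to use the rewrites $ff^\partial = {f^\partial}^\dagger f^\dagger$ and $f^\partial f = f^\dagger {f^\partial}^\dagger$ from Prop~\ref{prop:dag-Draz}.(\ref{prop:D3D4-expand}) to collapse mixed terms, notably $(ff^\dagger)({f^\partial}^\dagger f^\partial) = f(f^\dagger {f^\partial}^\dagger)f^\partial = f(f^\partial f)f^\partial = ff^\partial$ via \textbf{[$\text{D}^{\dagger}$.2]}. After this reduction, \textbf{[D.1]} for $ff^\dagger$ becomes exactly \textbf{[$\text{D}^{\dagger}$.6]} for $f$, \textbf{[D.2]} collapses using \textbf{[$\text{D}^{\dagger}$.2]}, and \textbf{[D.3]} follows since $hh^D = ff^\partial = h^D h$ by the same rewrites (the second equality uses \textbf{[$\text{D}^{\dagger}$.2]} once more to absorb the extra factor). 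This calculation also delivers the formula $(ff^\dagger)^D = {f^\partial}^\dagger f^\partial = {f^\dagger}^\partial f^\partial$ of (vii) and the inequality $\mathsf{ind}^D(ff^\dagger) \leq \mathsf{ind}^\partial(f)$.

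For (iii)$\Rightarrow$(i), I would define $f^\partial := f^\dagger(ff^\dagger)^D$ and verify the $\dagger$-Drazin axioms directly. Self-adjointness of $(ff^\dagger)^D$ (from Lemma~\ref{lem:Draz-dag}) makes \textbf{[$\text{D}^{\dagger}$.3]} and \textbf{[$\text{D}^{\dagger}$.4]} immediate; \textbf{[$\text{D}^{\dagger}$.2]} reduces to the standard Drazin identity $(ff^\dagger)(ff^\dagger)^D(ff^\dagger) = ff^\dagger$; and by Lemma~\ref{lem:D.5678} it suffices to check one of \textbf{[$\text{D}^{\dagger}$.5]}--\textbf{[$\text{D}^{\dagger}$.8]}, with \textbf{[$\text{D}^{\dagger}$.6]} following from $(ff^\dagger)^k ff^\partial = (ff^\dagger)^{k+1}(ff^\dagger)^D = (ff^\dagger)^k$ whenever $k \geq \mathsf{ind}^D(ff^\dagger)$. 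Running the symmetric argument from (iv) establishes \textbf{[$\text{D}^{\dagger}$.7]} at $m = \mathsf{ind}^D(f^\dagger f)$, so combining these with monotonicity of the index conditions yields the complementary inequality $\mathsf{ind}^\partial(f) \leq \max\{\mathsf{ind}^D(ff^\dagger),\mathsf{ind}^D(f^\dagger f)\}$, which together with the previous bound gives (viii).

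The remaining formula $f^\partial = (f^\dagger f)^D f^\dagger$ in (vi) is obtained by applying the (iii)$\Rightarrow$(i) formula to $f^\dagger$ to deduce ${f^\dagger}^\partial = f(f^\dagger f)^D$, then taking adjoints via Prop~\ref{prop:dag-Draz}.(\ref{prop:dag-Draz-adj}) and using self-adjointness of $(f^\dagger f)^D$. Both identities of (vii) then follow by uniqueness of Drazin inverses. The main obstacle is not conceptual but a careful bookkeeping of how the four $\dagger$-Drazin axioms and the three Drazin axioms interact through these rewrites, and especially making sure the minimal $k$ in \textbf{[$\text{D}^{\dagger}$.6]} aligns exactly with the Drazin index of $ff^\dagger$ so that (viii) emerges as an equality of indices rather than just mutual inequalities.
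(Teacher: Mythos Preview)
Your proposal is correct and follows essentially the same architecture as the paper's proof: the same candidate $(ff^\dagger)^D := {f^\partial}^\dagger f^\partial$ for (i)$\Rightarrow$(iii) and the same candidate $f^\partial := f^\dagger(ff^\dagger)^D$ for (iii)$\Rightarrow$(i), verified via the same rewrites from Prop~\ref{prop:dag-Draz}. One slip: in your (iii)$\Rightarrow$(i) sketch you say \textbf{[$\text{D}^{\dagger}$.2]} reduces to ``$(ff^\dagger)(ff^\dagger)^D(ff^\dagger) = ff^\dagger$'', but that identity is \emph{not} a Drazin axiom (it fails when the index exceeds~$1$); what you actually need and use is \textbf{[D.2]}, namely $(ff^\dagger)^D(ff^\dagger)(ff^\dagger)^D = (ff^\dagger)^D$, which is exactly what the computation $f^\dagger(ff^\dagger)^D \cdot f \cdot f^\dagger(ff^\dagger)^D = f^\dagger(ff^\dagger)^D$ invokes.

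There is one noteworthy difference in packaging. The paper obtains (iii)$\Leftrightarrow$(iv) by invoking the general Drazin-theoretic fact that $gh$ is Drazin iff $hg$ is \cite[Lemma~7.3]{cockett2024drazin}, and obtains the second equality of (\ref{thm:vi}) from the companion identity $(gh)^D g = g(hg)^D$ \cite[Lemma~7.5]{cockett2024drazin}. You instead bootstrap both from the $f \leftrightarrow f^\dagger$ symmetry together with (i)$\Leftrightarrow$(ii), which makes your argument self-contained and avoids importing those external lemmas. The paper's route is shorter to state; yours is more internal to the $\dagger$-Drazin framework. Either is fine.
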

\begin{proof} First note that $(i) \Leftrightarrow (ii)$ follows from Prop \ref{prop:dag-Draz}.(\ref{prop:dag-Draz-adj}). On the other hand, $(iii) \Leftrightarrow (iv)$ follows from the fact in any category, for maps $g: A \to B$ and $h: B \to A$, $gh$ is Drazin if and only if $hg$ is Drazin \cite[Lemma 7.3]{cockett2024drazin}. Then combining this fact with Cor \ref{cor:positive} we get that $ff^\dagger$ is ($\dagger$-)Drazin if and only if $f^\dagger f$ is ($\dagger$-)Drazin. So to prove the desired statement, it remains to show that $(i) \Leftrightarrow (iii)$. 

For $(i) \Rightarrow (iii)$, suppose that $f$ is $\dagger$-Drazin. We will show that $(f f^\dagger)^D : = {f^\partial}^\dagger f^\partial$ satisfies the three axioms of a Drazin inverse. 
\begin{enumerate}[{\bf [D.1]}]
\item $(f f^\dagger)^{k+1} (f f^\dagger)^D  \stackrel{\text{def.}}{=}  (f f^\dagger)^{k+1}  {f^\partial}^\dagger f^\partial = (f f^\dagger)^{k} f f^\dagger {f^\partial}^\dagger f^\partial \stackrel{\text{Prop. \ref{prop:dag-Draz}.(\ref{prop:D2D3D4})}}{=} (f f^\dagger)^{k} f f^\partial \stackrel{\text{\textbf{[$\text{D}^{\dagger}$.6]}}}{=} (f f^\dagger)^{k} $
\item $(f f^\dagger)^D f f^\dagger (f f^\dagger)^D \stackrel{\text{def.}}{=} {f^\partial}^\dagger f^\partial f f^\dagger {f^\partial}^\dagger f^\partial \stackrel{\text{Prop. \ref{prop:dag-Draz}.(\ref{prop:D2D3D4})}}{=} {f^\partial}^\dagger f^\partial f f^\partial \stackrel{\text{\textbf{[$\text{D}^{\dagger}$.2]}}}{=} {f^\partial}^\dagger f^\partial \stackrel{\text{def.}}{=} (f f^\dagger)^D$ 
\item $ (f f^\dagger)^D f f^\dagger \stackrel{\text{def.}}{=} {f^\partial}^\dagger f^\partial f f^\dagger \stackrel{\text{Prop. \ref{prop:dag-Draz}.(\ref{prop:D2D3D4})}}{=} f f^\partial {f^\partial}^\dagger f^\dagger \stackrel{\text{Prop. \ref{prop:dag-Draz}.(\ref{prop:D2D3D4})}}{=} f f^\dagger {f^\partial}^\dagger f^\partial \stackrel{\text{def.}}{=}  f f^\dagger  (f f^\dagger)^D $
\end{enumerate}
So we conclude that $ff^\dagger$ Drazin. 

For $(iii) \Rightarrow (i)$, suppose that $f f^\dagger$ is Drazin. We will show that $f^\partial := f^\dagger (f f^\dagger)^D$ satisfies the necessary axioms to be a $\dagger$-Drazin inverse of $f$. 
\begin{enumerate}[{\bf [$\text{D}^{\dagger}$.1]}]
\setcounter{enumi}{5}
\item $(f f^\dagger)^k f f^\partial \stackrel{\text{def.}}{=}   (f f^\dagger)^k f f^\dagger (f f^\dagger)^D = (f f^\dagger)^{k+1} (f f^\dagger)^D \stackrel{\text{\textbf{[$\text{D}$.1]}}}{=} (f f^\dagger)^k$
\end{enumerate}
\begin{enumerate}[{\bf [$\text{D}^{\dagger}$.1]}]
\setcounter{enumi}{1}
\item $f^\partial f f^\partial \stackrel{\text{def.}}{=}   f^\dagger (f f^\dagger)^D f  f^\dagger (f f^\dagger)^D \stackrel{\text{\textbf{[$\text{D}$.2]}}}{=}  f^\dagger (f f^\dagger)^D  \stackrel{\text{def.}}{=} f^\partial$
\end{enumerate}
For the remaining two axioms, recall that by Lemma \ref{lem:Draz-dag}, since $ff^\dagger$ is self-adjoint then so is $(ff^\dagger)^D$. 
\begin{enumerate}[{\bf [$\text{D}^{\dagger}$.1]}]
\setcounter{enumi}{2}
\item $(ff^\partial)^\dagger \stackrel{\text{def.}}{=} (f f^\dagger (f f^\dagger)^D)^\dagger = f f^\dagger (f f^\dagger)^D \stackrel{\text{\textbf{[$\text{D}$.3]}}}{=} f f^\dagger (f f^\dagger)^D  \stackrel{\text{def.}}{=} f f^\partial$
\item $(f^\partial f)^\dagger \stackrel{\text{def.}}{=} (f^\dagger (f f^\dagger)^D f)^\dagger =  f^\dagger (f f^\dagger)^D f \stackrel{\text{def.}}{=}  f^\partial f$
\end{enumerate}
So we conclude that $f$ is $\dagger$-Drazin. Therefore, we have that $(i) \Leftrightarrow (ii) \Leftrightarrow (iii) \Leftrightarrow (iv)$ as desired. 

Now for (\ref{thm:vi}), we have already shown that $f^\partial = f^\dagger (f f^\dagger)^D$. For the other part of the equality we will need the only following fact: for maps $g: A \to B$ and $h: B \to A$ such that $gh$ and $hg$ are Drazin, then $(gh)^D g = g(hg)^D$ and $h(gh)^D = (hg)h^D$ \cite[Lemma 7.5]{cockett2024drazin}. Therefore, we indeed have that $f^\partial = f^\dagger (f f^\dagger)^D = (f^\dagger f)^D f^\dagger$. Then by Prop. \ref{prop:dag-Draz}.(\ref{prop:dag-Draz-adj}) and  Lemma \ref{lem:Draz-dag}, we also get that ${f^\dagger}^\partial = f (f^\dagger f)^D  =  (f f^\dagger)^D f$. For (\ref{thm:vii}), we have already shown that $(f f^\dagger)^D = {f^\partial}^\dagger f^\partial$, so by Prop. \ref{prop:dag-Draz}.(\ref{prop:dag-Draz-adj}) we get that $(f f^\dagger)^D = {f^\dagger}^\partial f^\partial$, and then applying the same formula to $f^\dagger$, we get that $(f^\dagger f)^D = f^\partial {f^\dagger}^\partial$. Lastly, by the arguments made in the above computations, it is straightforward to check that (\ref{thm:viii}) holds. 
\end{proof}

It immediately follows that, unsurprisingly, having a $\dagger$-group inverse corresponds to both induced positive maps having a group inverse. 

\begin{corollary} In a $\dagger$-category, a map $f$ has a $\dagger$-group inverse if and only if $f f^\dagger$ and $f^\dagger f$ have group inverses. 
\end{corollary}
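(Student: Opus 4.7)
The plan is to chain together the three characterizations that are already available in the paper, so no new calculation is required. By Lemma \ref{Lem:dagger-group inverse}, $f$ has a $\dagger$-group inverse if and only if $f$ is $\dagger$-Drazin with $\mathsf{ind}^\partial(f) \leq 1$. By Theorem \ref{thm:Drazin=dag-Drazin}, $f$ being $\dagger$-Drazin is equivalent to both $ff^\dagger$ and $f^\dagger f$ being Drazin, and in that case part \textit{(viii)} of the theorem gives the index equality
\[
\mathsf{ind}^\partial(f) \;=\; \max\bigl\{ \mathsf{ind}^D(ff^\dagger),\; \mathsf{ind}^D(f^\dagger f) \bigr\}.
\]

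Now I would invoke the fact, recalled just after Definition \ref{def:groupinv} via \cite[Lemma 3.9]{cockett2024drazin}, that an endomorphism has a group inverse if and only if it is Drazin with Drazin index at most $1$. Combined with the index equality above, the condition $\mathsf{ind}^\partial(f) \leq 1$ is equivalent to $\mathsf{ind}^D(ff^\dagger) \leq 1$ and $\mathsf{ind}^D(f^\dagger f) \leq 1$ both holding, which in turn is equivalent to $ff^\dagger$ and $f^\dagger f$ each having a group inverse. Chaining these equivalences together produces exactly the statement of the corollary.

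There is no real obstacle here: all the work has already been done in Lemma \ref{Lem:dagger-group inverse} and Theorem \ref{thm:Drazin=dag-Drazin}. The only point to be careful about is that Theorem \ref{thm:Drazin=dag-Drazin} already establishes the \emph{simultaneous} Drazin-invertibility of $ff^\dagger$ and $f^\dagger f$ (not just one of them), so taking the maximum of the two Drazin indices is well-defined on both sides of the equivalence, and the $\leq 1$ condition transfers cleanly.
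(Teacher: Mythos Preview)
Your proposal is correct and takes essentially the same approach as the paper: both proofs combine Lemma \ref{Lem:dagger-group inverse}, the index formula from Theorem \ref{thm:Drazin=dag-Drazin}.(\ref{thm:viii}), and the characterization of group inverses as Drazin inverses with index at most $1$. The only cosmetic difference is that the paper writes out the two implications separately while you phrase it as a single chain of equivalences.
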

\begin{proof} For the $\Rightarrow$ direction, suppose that $f$ has a $\dagger$-group inverse. Which implies that $f$ is $\dagger$-Drazin with $\mathsf{ind}^\partial(f) \leq 1$. Then by Thm \ref{thm:Drazin=dag-Drazin}, we have that $ff^\dagger$ and $f^\dagger f$ are Drazin, and also that $\mathsf{ind}^D(ff^\dagger) \leq \mathsf{ind}^\partial(f) \leq 1$ and $\mathsf{ind}^D(f^\dagger f) \leq \mathsf{ind}^\partial(f) \leq 1$. This implies that $f f^\dagger$ and $f^\dagger f$ have group inverses. Conversely, for the $\Leftarrow$ direction, suppose that $f f^\dagger$ and $f^\dagger f$ both have group inverses. So they are both Drazin and $\mathsf{ind}^D(ff^\dagger) \leq 1$ and $\mathsf{ind}^D(f^\dagger f)  \leq 1$. Then by Thm \ref{thm:Drazin=dag-Drazin}, we have that $f$ is $\dagger$-Drazin and $\mathsf{ind}^\partial(f) = \mathsf{max}\lbrace \mathsf{ind}^D(ff^\dagger), \mathsf{ind}^D(f^\dagger f) \rbrace \leq 1$. Therefore, $f$ has a $\dagger$-group inverse. 
\end{proof}

We also obtain an equivalent characterization of $\dagger$-Drazin categories: 

\begin{corollary} A $\dagger$-category is $\dagger$-Drazin if and only if every positive map is Drazin. 
\end{corollary}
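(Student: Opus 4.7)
The plan is to derive this corollary almost immediately from Thm \ref{thm:Drazin=dag-Drazin} together with Cor \ref{cor:positive}, treating each direction separately.

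For the $\Rightarrow$ direction, I would assume the $\dagger$-category is $\dagger$-Drazin, so every map is $\dagger$-Drazin. A positive map $p$ is in particular a (self-adjoint) endomorphism, hence $\dagger$-Drazin by assumption. Since $p$ is self-adjoint, Lemma \ref{lem:self-adjoint} (equivalently Cor \ref{cor:positive}) immediately yields that $p$ is Drazin. Alternatively, one can write $p = g g^\dagger$ and apply Thm \ref{thm:Drazin=dag-Drazin} to conclude $g g^\dagger$ is Drazin from the fact that $g$ is $\dagger$-Drazin.

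For the $\Leftarrow$ direction, I would assume every positive map in the category is Drazin. Let $f$ be an arbitrary map; then $f f^\dagger$ is positive by definition, hence Drazin by assumption. The equivalence $(i) \Leftrightarrow (iii)$ in Thm \ref{thm:Drazin=dag-Drazin} then forces $f$ to be $\dagger$-Drazin. Since $f$ was arbitrary, the $\dagger$-category is $\dagger$-Drazin.

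There is no real obstacle here since everything is packaged up in the preceding theorem; the main content was already established in proving Thm \ref{thm:Drazin=dag-Drazin}. The only thing worth being careful about is to note that the quantification in the statement ranges over \emph{positive maps} specifically (not just self-adjoint endomorphisms), so one should make explicit use of the fact that for any map $f$ the composite $f f^\dagger$ is positive in order to invoke the hypothesis.
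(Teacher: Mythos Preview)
Your proposal is correct and matches the paper's intended approach: the paper states this corollary without proof, treating it as an immediate consequence of Thm~\ref{thm:Drazin=dag-Drazin} (and Cor~\ref{cor:positive}), which is exactly the route you spell out.
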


This also allows us to conclude that a Drazin category, which has a  $\dagger$, is automatically a $\dagger$-Drazin category. 

\begin{corollary}\label{cor:Drazin-Dagger-implies-Dagger-Drazin} Every Drazin $\dagger$-category (by which we mean a $\dagger$-category whose underlying category is Drazin) is $\dagger$-Drazin. In particular, if $\mathbb{X}$ is a Drazin category, then for any dagger $\dagger$ on $\mathbb{X}$, $\mathbb{X}$ is $\dagger$-Drazin. 
\end{corollary}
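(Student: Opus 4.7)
The plan is to deduce this immediately from the previous corollary, which states that a $\dagger$-category is $\dagger$-Drazin if and only if every positive map in it is Drazin. So the entire proof reduces to checking the hypothesis of that corollary under the assumption that the underlying category is Drazin.

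First I would spell out the definitions: a Drazin category is one in which every map is Drazin, and in a $\dagger$-category a positive map is by definition an endomorphism of the form $ff^\dagger$. So if $\mathbb{X}$ is a Drazin $\dagger$-category, then in particular every endomorphism is Drazin, and hence every positive map $p = ff^\dagger$ is Drazin (since positive maps are endomorphisms).

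Then I would invoke the preceding corollary to conclude that $\mathbb{X}$ is $\dagger$-Drazin. The ``in particular'' statement is then the same assertion phrased slightly differently: starting from a Drazin category $\mathbb{X}$ and equipping it with any choice of dagger $\dagger$ yields a Drazin $\dagger$-category, so the first part applies.

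There is essentially no obstacle here -- this is a one-line consequence. The only thing worth being careful about is making sure the reader sees that ``Drazin'' in ``Drazin $\dagger$-category'' refers to the underlying category being Drazin (so that \emph{every} map, and in particular every endomorphism, is Drazin), which is why the hypothesis of the previous corollary on positive maps is automatically satisfied.
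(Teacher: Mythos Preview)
Your proposal is correct and matches the paper's intended argument: the corollary is stated without proof precisely because it follows immediately from the preceding corollary (a $\dagger$-category is $\dagger$-Drazin if and only if every positive map is Drazin), and you have spelled out exactly that one-line deduction.
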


We are now in a position to give examples of $\dagger$-Drazin inverses. Our first examples are matrices over a field. For any field, every matrix over the field has a $\dagger$-Drazin inverse with respect to the transpose operator. Moreover, a matrix over an involutive field also has $\dagger$-Drazin inverse with respect to the conjugate transpose operator. 

\begin{example}\label{ex:matrices-field} For a field $\mathbb{K}$, let $\mathsf{MAT}(\mathbb{K})$ be the category whose objects are natural numbers $n \in \mathbb{N}$ and where a map $A: n \to m$ is an $n \times m$ $\mathbb{K}$-matrix. Every square $\mathbb{K}$-matrix has a Drazin inverse, and therefore $\mathsf{MAT}(\mathbb{K})$ is a Drazin category -- see \cite[Sec 3.2]{cockett2024drazin} for how to construct the Drazin inverse. Moreover, the Drazin index of a square matrix $B$ corresponds precisely to the \textbf{index} of $B$ \cite[Def 7.2.1]{campbell2009generalized}, that is, the least $k \in \mathbb{N}$ such that $\mathsf{rank}(B^{k+1}) = \mathsf{rank}(B^k)$. On the other hand, $\mathsf{MAT}(\mathbb{K})$ is a $\dagger$-category whose dagger is the transpose operator $\mathsf{T}$, $A^\mathsf{T}(i,j) = A(j,i)$. As such, $\mathsf{MAT}(\mathbb{K})$ is a Drazin $\dagger$-category, and so in particular also $\dagger$-Drazin. So every $\mathbb{K}$-matrix $A$ has a $\mathsf{T}$-Drazin inverse given by $A^\partial = A^\mathsf{T} (A A^\mathsf{T})^D$. Furthermore, the $\mathsf{T}$-Drazin index of a matrix $A$ is the maximum between the index of $AA^\mathsf{T}$ and the index of $A^\mathsf{T}A$. 
\end{example}

\begin{example}\label{ex:matrices-inv-field} If $\mathbb{K}$ is an involutive field, with involution $x \mapsto \overline{x}$, then $\mathsf{MAT}(\mathbb{K})$ has another dagger this time given by the conjugate transpose operator $\ast$, $A^\ast(i,j) = \overline{A(j,i)}$. Then $\mathsf{MAT}(\mathbb{K})$ is still a Drazin $\dagger$-category with respect to $\ast$, and so in particular also $\dagger$-Drazin. So every $\mathbb{K}$-matrix $A$ has a $\ast$-Drazin inverse given by $A^\partial = A^\ast (A A^\ast)^D$, and where the $\ast$-Drazin index of $A$ is the maximum between the index of $AA^\ast$ and the index of $A^\ast A$. We will revisit this example when $\mathbb{K}$ is the involutive field of complex numbers in Ex \ref{ex:complex-matrices} below. 
\end{example}

The converse of Cor \ref{cor:Drazin-Dagger-implies-Dagger-Drazin} is not true, that is, there are $\dagger$-Drazin categories that are not Drazin. An example arises from considering \emph{inverse categories}. 

\begin{example}\label{ex:inv-cat} An \textbf{inverse category} may be viewed as a $\dagger$-category in which every map is a partial isometry (that is, $f f^\dagger f = f$) and for all parallel maps $f$ and $g$, $ff^\dagger gg^\dagger = g g^\dagger ff^\dagger$. Recall that by Lemma \ref{lem:dag-Draz-maps}.(\ref{lem:dag-Draz-partial-isom}), every partial isometry is $\dagger$-Drazin with $\dagger$-Drazin inverse being its adjoint and $\dagger$-Drazin index less than or equal to $1$. Therefore every inverse category is $\dagger$-Drazin. 
\end{example}

\begin{example}\label{ex:PINJ}
Let $\mathsf{PINJ}$ be the category of sets and partial injections. Then $\mathsf{PINJ}$ is an inverse category where for a partial injection $f: X \to Y$, $f^\circ: Y \to X$ is defined as $f^\circ(y) = x$ if $f(x)=y$ and is undefined otherwise. As $\mathsf{PINJ}$ is an inverse category it is certainly $\dagger$-Drazin, and so every partial injection $f$ is $\dagger$-Drazin with $f^\partial = f^\circ$. However, $\mathsf{PINJ}$ is not Drazin. Consider the successor function $s: \mathbb{N} \to \mathbb{N}$, $s(n) = n+1$. Suppose that $s$ had a Drazin inverse $s^D: \mathbb{N} \to \mathbb{N}$ and $\mathsf{ind}(s) =k$. By \textbf{[D.3]}, we would have that $s^D(n) = s^D(s^n(0)) =  s^n(s^D(0)) = s^D(0) + n$. So $s^D$ is completely determined by $s^D(0)$. If $s^D(0)$ is defined, then by \textbf{[D.1]} we compute that $k = s^k(0) = s^D(s^{k+1}(0)) = s^D(k+1) = s^D(0) + k +1$. This implies that $0 = s^D(0) + 1$ -- which is a contradiction since $s^D(0) \in \mathbb{N}$. On the other hand if $s^D(0)$ is undefined, then $s^k(0) = s^D(s^{k+1}(0)) = s^{k+1}(s^D(0)) $ would also be undefined, but this is a contradiction since $s^k$ is total. So the successor function does not have a Drazin inverse, and therefore $\mathsf{PINJ}$ is not Drazin. 
\end{example}

Even in $\dagger$-categories that are neither Drazin nor $\dagger$-Drazin, it is sometimes possible to characterize the maps that are $\dagger$-Drazin. 

\begin{example}\label{ex:Hilb} Let $\mathsf{HILB}$ be the category of (complex) Hilbert spaces and bounded linear operators between them. Then $\mathsf{HILB}$ is a $\dagger$-category where the dagger is given by the adjoint $\ast$, that is, for a bounded linear operator $f: H_1 \to H_2$, $f^\ast: H_2 \to H_1$ is the unique bounded linear operator such that $\langle f(x) \vert y \rangle  =\langle x \vert f^\ast(y) \rangle$ for all $x \in H_1$ and $y \in H_2$. $\mathsf{HILB}$ is not Drazin, nor is it $\dagger$-Drazin. Nevertheless, we can still characterize the $\ast$-Drazin maps in $\mathsf{HILB}$, since there is a characterization of when a bounded linear operator is Drazin. For a bounded linear operator $f: H_1 \to H_2$, let $\mathsf{N}(f)$ denote its null space and $\mathsf{R}(f)$ its range space. A bounded linear operator $g: H \to H$ is said to be has \textbf{finite ascent} (resp. \textbf{descent}) if there exists a $k \in \mathbb{N}$ such that $\mathsf{N}(g^k) = \mathsf{N}(g^{k+1})$ (resp. $\mathsf{R}(g^k) = \mathsf{R}(g^{k+1})$), and the least such $k$ is called the \textbf{ascent} (resp. \textbf{descent}) of $g$. Then a bounded linear operator $g: H \to H$ is Drazin if and only if it has both finite ascent and finite descent \cite{King1977ANO,wei2003representation}, and furthermore its Drazin index corresponds to its ascent (or equivalently its descent, which are equal in this). Therefore, it follows that a bounded linear operator $f: H_1 \to H_2$ is $\ast$-Drazin if and only if $ff^\ast$ (or equivalently $f^\ast f$) has both finite ascent and finite descent. In this case, the $\ast$-Drazin of $f$ is equal to the maximum of the ascent (or equivalently descent) of $ff^\ast$ or $f^\ast f$. 
\end{example}

\section{Comparing Dagger-Drazin and Moore-Penrose}

In this section, we explain the relationship between $\dagger$-Drazin inverses and Moore-Penrose inverses. In particular, the main result of this section is that having a Moore-Penrose inverse is equivalent to being \emph{a} $\dagger$-Drazin inverse (which we stress is different than simply being $\dagger$-Drazin inverse and having a $\dagger$-Drazin inverse). For a detailed introduction to Moore-Penrose inverses in dagger categories, we invite the reader to see \cite{EPTCS384.10}.

\begin{definition} \cite[Def 2.3]{EPTCS384.10} In a $\dagger$-category,  a \textbf{Moore-Penrose inverse} of a map $f: A \to B$ is a map of dual type $f^\circ: B \to A$ such that:
\begin{enumerate}[{\bf [$\text{MP}$.1]}]
    \item $f f^\circ f = f$
    \item $f^\circ f f^\circ = f^\circ$
\item $(ff^\circ)^\dagger = ff^\circ$
    \item $(f^\circ f)^\dagger = f^\circ f$
\end{enumerate}
If $f$ has a Moore-Penrose inverse, we say that $f$ is \textbf{Moore-Penrose invertible} or simply \textbf{Moore-Penrose}. A \textbf{Moore-Penrose $\dagger$-category} is a $\dagger$-category such that every map is Moore-Penrose. 
\end{definition}

Moore-Penrose inverses are \emph{unique} \cite[Lemma 2.4]{EPTCS384.10}, so again we may speak of \emph{the} Moore-Penrose inverse of a map and that being Moore-Penrose is a property rather than structure. Many examples and properties of Moore-Penrose inverses and Moore-Penrose dagger categories can be found in \cite{EPTCS384.10}. 

We now show that having a Moore-Penrose inverse is equivalent to being a $\dagger$-Drazin inverse. It is important to note that being a $\dagger$-Drazin inverse is strictly stronger than being $\dagger$-Drazin. Indeed, while every $\dagger$-Drazin inverse is $\dagger$-Drazin, not every $\dagger$-Drazin map is itself a $\dagger$-Drazin inverse. A $\dagger$-Drazin map is a $\dagger$-Drazin inverse precisely when it is the $\dagger$-Drazin inverse of its $\dagger$-Drazin inverse.  

\begin{theorem}\label{thm:MP=a-dag-Drazin} In a $\dagger$-category, the following are equivalent for a map $f$:
   \begin{enumerate}[(i)]
\item $f$ is Moore-Penrose;
\item $f$ is a $\dagger$-Drazin inverse;
\item $f$ is $\dagger$-Drazin and $f^{\partial \partial} = f$. 
\end{enumerate}
Therefore, if $f$ is Moore-Penrose then $f$ is $\dagger$-Drazin where $f^\partial = f^\circ$ and $\mathsf{ind}^\partial(f)\leq 1$ (in other words, $f^\circ$ is the $\dagger$-group inverse of $f$), and, moreover, $f$ is the $\dagger$-Drazin inverse of $f^\circ$, that is, ${f^\circ}^\partial = f$. 
\end{theorem}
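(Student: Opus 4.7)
The plan is to establish the cycle $(i) \Rightarrow (ii) \Rightarrow (iii) \Rightarrow (i)$ and then extract the ``moreover'' clauses from the $(i) \Rightarrow (ii)$ step.

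For $(i) \Rightarrow (ii)$, I would prove the sharper statement that whenever $f$ has Moore-Penrose inverse $f^\circ$, the map $f$ is itself the $\dagger$-Drazin inverse of $f^\circ$, i.e., $(f^\circ)^\partial = f$. Treating $f^\circ$ as the original map and $f$ as the candidate $\dagger$-Drazin inverse, axioms \textbf{[$\text{D}^{\dagger}$.2]}, \textbf{[$\text{D}^{\dagger}$.3]}, \textbf{[$\text{D}^{\dagger}$.4]} reduce directly to \textbf{[MP.1]}, \textbf{[MP.4]}, \textbf{[MP.3]}. For \textbf{[$\text{D}^{\dagger}$.1]} at $k=1$, the computation chains \textbf{[MP.3]} and \textbf{[MP.4]}: for example, $f^\circ {f^\circ}^\dagger f^\circ f = f^\circ {f^\circ}^\dagger (f^\circ f)^\dagger = f^\circ {f^\circ}^\dagger f^\dagger {f^\circ}^\dagger = f^\circ (f f^\circ)^\dagger {f^\circ}^\dagger = f^\circ f f^\circ {f^\circ}^\dagger = f^\circ {f^\circ}^\dagger$, using \textbf{[MP.4]}, \textbf{[MP.3]}, and \textbf{[MP.2]} in turn; the second half of \textbf{[$\text{D}^{\dagger}$.1]} is handled by the dual computation.

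For $(ii) \Rightarrow (iii)$, suppose $f = g^\partial$ for some $\dagger$-Drazin $g$. By Prop \ref{prop:dag-Draz}.(\ref{prop:dag-Draz-dag-Draz}), $g^\partial$ is itself $\dagger$-Drazin, so $f$ is $\dagger$-Drazin; applying Prop \ref{prop:dag-Draz}.(\ref{prop:dag-Draz-dag-Draz-dag-Drax}) to $g$ gives $g^{\partial\partial\partial} = g^\partial$, which under the substitution $f = g^\partial$ becomes $f^{\partial\partial} = f$. For $(iii) \Rightarrow (i)$, suppose $f$ is $\dagger$-Drazin with $f^{\partial\partial} = f$; then $f^\partial$ is a Moore-Penrose inverse of $f$. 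Indeed, Prop \ref{prop:dag-Draz}.(\ref{prop:dag-Draz-dag-Draz}) gives $f^{\partial\partial} = f f^\partial f$, so the hypothesis yields \textbf{[MP.1]}, while \textbf{[MP.2]}, \textbf{[MP.3]}, \textbf{[MP.4]} are precisely \textbf{[$\text{D}^{\dagger}$.2]}, \textbf{[$\text{D}^{\dagger}$.3]}, \textbf{[$\text{D}^{\dagger}$.4]}.

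For the ``moreover'' clauses, assume $f$ is Moore-Penrose. The $(i) \Rightarrow (ii)$ computation already established ${f^\circ}^\partial = f$. Running the same axiom check with the roles swapped---that is, with $f$ as the original map and $f^\circ$ as the candidate---shows that $f^\circ$ satisfies \textbf{[$\text{D}^{\dagger}$.2]}--\textbf{[$\text{D}^{\dagger}$.4]} together with \textbf{[$\text{D}^{\dagger}$.1]} at $k=1$; by uniqueness (Prop \ref{prop:dag-Draz-unique}), $f^\partial = f^\circ$, and the $k=1$ witness gives $\mathsf{ind}^\partial(f) \leq 1$, i.e. $f^\circ$ is the $\dagger$-group inverse of $f$. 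The main obstacle is the \textbf{[$\text{D}^{\dagger}$.1]} verification in $(i) \Rightarrow (ii)$: the other axioms are essentially relabellings of the Moore-Penrose equations, but \textbf{[$\text{D}^{\dagger}$.1]} interleaves $f$, $f^\circ$, and $f^\dagger$, so one must apply \textbf{[MP.3]} and \textbf{[MP.4]} in the right order to convert the dagger-indexed factors into a form that \textbf{[MP.1]} or \textbf{[MP.2]} can collapse. Everything else is bookkeeping using Prop \ref{prop:dag-Draz}.
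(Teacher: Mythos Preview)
Your argument is correct and rests on the same two ingredients as the paper's: the term-by-term identification of the Moore--Penrose axioms with \textbf{[$\text{D}^{\dagger}$.2]}--\textbf{[$\text{D}^{\dagger}$.4]}, and Prop~\ref{prop:dag-Draz}.(\ref{prop:dag-Draz-dag-Draz})+(\ref{prop:dag-Draz-dag-Draz-dag-Draz-dag-Drax}) to pass between $(ii)$ and $(iii)$. The only real difference is the direction of the cycle. The paper runs $(i)\Rightarrow(iii)\Rightarrow(ii)\Rightarrow(i)$: for $(i)\Rightarrow(iii)$ it shows that $f^\circ$ is the $\dagger$-Drazin inverse of $f$, where the \textbf{[$\text{D}^{\dagger}$.5]} check at $j=1$ is the one-liner $f f^\circ f f^\dagger = f f^\dagger$ straight from \textbf{[MP.1]}, and then reads off $f^{\partial\partial}=f f^\circ f=f$. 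You instead run $(i)\Rightarrow(ii)$ by showing directly that $f$ is the $\dagger$-Drazin inverse of $f^\circ$; this forces you to verify \textbf{[$\text{D}^{\dagger}$.1]} for the base map $f^\circ$, which drags in $(f^\circ)^\dagger$ and accounts for the longer computation you flagged as the ``main obstacle.'' Both routes work; the paper's ordering simply makes the iteration axiom a triviality and yields $f^\partial=f^\circ$ with $\mathsf{ind}^\partial(f)\le 1$ as a by-product rather than a separate ``moreover'' verification.
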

\begin{proof} We will prove $(i) \Rightarrow (iii) \Rightarrow (ii) \Rightarrow (i)$. So for $(i) \Rightarrow (iii)$, suppose that $f$ is Moore-Penrose. We will show that its Moore-Penrose inverse $f^\circ$ is also its $\dagger$-Drazin inverse. However observe that \textbf{[$\text{D}^{\dagger}$.2]} is \textbf{[$\text{MP}$.1]}, \textbf{[$\text{D}^{\dagger}$.3]} is \textbf{[$\text{MP}$.4]}, and \textbf{[$\text{D}^{\dagger}$.4]} is \textbf{[$\text{MP}$.3]}. It remains to show \textbf{[$\text{D}^{\dagger}$.5]}. So set $j=1$, then we easily have that $f f^\partial f f^\dagger \stackrel{\text{def.}}{=} f f^\circ f f^\dagger \stackrel{\text{\textbf{[$\text{MP}$.1]}}}{=}  f f^\dagger$. So $f^\circ$ is the $\dagger$-Drazin inverse of $f$. Lastly, by Prop \ref{prop:dag-Draz}.(\ref{prop:dag-Draz-dag-Draz}), $f^{\partial \partial} = f$ is precisely \textbf{[$\text{MP}$.1]}. Next note $(iii) \Rightarrow (ii)$ is automatic by assumption that $f^{\partial \partial} = f$. So it remains to show that $(ii) \Rightarrow (i)$. So suppose that $f$ is a $\dagger$-Drazin inverse for map $g$, that is, $f = g^\partial$. By Prop \ref{prop:dag-Draz}.(\ref{prop:dag-Draz-dag-Draz}), we know that $f$ is also $\dagger$-Drazin where $f^\partial = g f g$. We will show that $f^\partial$ is also the Moore-Penrose inverse of $f$. To do so, recall that by Prop \ref{prop:dag-Draz}.(\ref{prop:dag-Draz-dag-Draz-dag-Drax}) that $f^{\partial \partial} = g^{\partial \partial \partial} = g^\partial =f$. So $f$ is the $\dagger$-Drazin inverse of $f^\partial$. Since $f$ is the $\dagger$-Drazin inverse of $f^\partial$, by \textbf{[$\text{D}^{\dagger}$.2]} we have that $f f^\partial f = f$, which is \textbf{[MP.1]}. On the other hand, since $f^\partial$ is the $\dagger$-Drazin inverse of $f$, \textbf{[$\text{D}^{\dagger}$.2]} gives us $f^\partial f f^\partial= f^\partial$, which is \textbf{[MP.2]}. Lastly, \textbf{[MP.3]} and \textbf{[MP.4]} are precisely \textbf{[$\text{D}^{\dagger}$.3]} and \textbf{[$\text{D}^{\dagger}$.4]} as well. So $f$ is Moore-Penrose. 
\end{proof}

This also allows us to give an equivalent characterization of Moore-Penrose $\dagger$ categories as special kind of $\dagger$-Drazin category. 

\begin{corollary} A $\dagger$-category is Moore-Penrose if and only if it is $\dagger$-Drazin where every map is also a $\dagger$-Drazin inverse.
\end{corollary}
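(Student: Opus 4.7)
The plan is to apply Theorem \ref{thm:MP=a-dag-Drazin} pointwise, since it already gives the required equivalence at the level of individual maps: (i) $f$ is Moore-Penrose, (ii) $f$ is a $\dagger$-Drazin inverse, (iii) $f$ is $\dagger$-Drazin with $f^{\partial \partial} = f$. The corollary simply globalizes this equivalence by asserting it for every map simultaneously, so no new technical content is required; what remains is only to unpack the definitions of \emph{Moore-Penrose $\dagger$-category} and \emph{$\dagger$-Drazin category} correctly.

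For the $\Rightarrow$ direction, I would assume every map $f$ in the category is Moore-Penrose. Then by the implication $(i) \Rightarrow (iii)$ of Theorem \ref{thm:MP=a-dag-Drazin}, every $f$ is $\dagger$-Drazin, so the category is $\dagger$-Drazin in the sense of Def \ref{def:dagger-Drazin}. Simultaneously, by the implication $(i) \Rightarrow (ii)$ of the same theorem, every $f$ is itself a $\dagger$-Drazin inverse, which is exactly the extra condition stated in the corollary.

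For the $\Leftarrow$ direction, I would assume the category is $\dagger$-Drazin and that in addition every map $f$ is a $\dagger$-Drazin inverse (of some map). Then condition (ii) of Theorem \ref{thm:MP=a-dag-Drazin} holds for every $f$, so by the implication $(ii) \Rightarrow (i)$, every $f$ is Moore-Penrose. Hence the category is Moore-Penrose. There is no real obstacle here beyond making sure the two conjuncts on the right-hand side are stated and used explicitly; the $\dagger$-Drazin hypothesis is actually redundant in the backward direction (since being a $\dagger$-Drazin inverse already entails being $\dagger$-Drazin by Prop \ref{prop:dag-Draz}.(\ref{prop:dag-Draz-dag-Draz})), but it is natural to include it to match the symmetric phrasing in the statement.
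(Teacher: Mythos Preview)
Your proposal is correct and matches the paper's approach: the paper states this corollary immediately after Theorem~\ref{thm:MP=a-dag-Drazin} with no separate proof, treating it as the evident pointwise globalization of that theorem, which is precisely what you do. Your observation that the $\dagger$-Drazin hypothesis is redundant in the $\Leftarrow$ direction (via Prop~\ref{prop:dag-Draz}.(\ref{prop:dag-Draz-dag-Draz})) is a nice extra remark not made explicit in the paper.
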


We already know from Ex \ref{ex:matrices-inv-field} that complex matrices have $\dagger$-Drazin inverses (with respect to the conjugate transpose dagger). However we now have an alternative explanation as to why, and an alternative description of its $\dagger$-Drazin inverse from the perspective of Moore Penrose inverse. 

\begin{example}\label{ex:complex-matrices} Let $\mathbb{C}$ be the field of complex numbers. $\mathsf{MAT}(\mathbb{C})$ with the conjugate transpose dagger $\ast$ is a Moore-Penrose $\dagger$-category where the Moore-Penrose inverse of a complex matrix can be constructed from its singular value decomposition \cite[Ex 2.9]{EPTCS384.10}. Therefore, every complex matrix is $\ast$-Drazin and its $\ast$-Drazin inverse is its Moore-Penrose inverse. Of course, we can also say that every complex matrix is the $\ast$-Drazin inverse of its Moore-Penrose inverse. 
\end{example}

It is important to note that while for any (involutive) field, its category of matrices is always $\dagger$-Drazin, it may not be Moore-Penrose. 

\begin{example} Consider $\mathsf{MAT}(\mathbb{C})$ this time with simply the transpose $\mathsf{T}$ dagger, which we know is $\dagger$-Drazin from Ex \ref{ex:matrices-field}, that is, every complex matrix has a $\mathsf{T}$-Drazin inverse. However, $\mathsf{MAT}(\mathbb{C})$ is not Moore-Penrose with the transpose dagger, in particular, the matrix $\begin{bmatrix} i & 1 \end{bmatrix}$ does not have a Moore-Penrose inverse with respect to the transpose \cite[Ex 2.10]{EPTCS384.10}. However $\begin{bmatrix} i & 1 \end{bmatrix}$ does have a $\mathsf{T}$-Drazin inverse. In fact, since $\begin{bmatrix} i & 1 \end{bmatrix}\begin{bmatrix} i & 1 \end{bmatrix}^\mathsf{T}=0$, it follows that $\begin{bmatrix} i & 1 \end{bmatrix}^\partial = \begin{bmatrix} 0 & 0 \end{bmatrix}^\mathsf{T}$. 
\end{example}

Recall that in Ex \ref{ex:Hilb}, we characterized $\dagger$-Drazin maps in the category of Hilbert spaces. We can now take a look at the subcategory of finite dimensional Hilbert spaces. 

\begin{example}\label{ex:FHILB} $\mathsf{HILB}$ is not Moore-Penrose but we can again characterize when bounded linear maps are Moore-Penrose -- they are precisely those whose range is closed \cite[Ex 2.12]{EPTCS384.10}. Moreover, for appropriate bounded linear maps, it is sometimes possible to formulate the Drazin inverse (and hence the $\dagger$-Drazin inverse) in terms of the Moore-Penrose inverse \cite[Lemma 2.1]{wei2003representation}. Now let $\mathsf{FHILB}$ be the subcategory of finite dimensional Hilbert spaces. $\mathsf{FHILB}$ is a Moore-Penrose $\dagger$-category \cite[Ex 2.12]{EPTCS384.10}, and therefore $\mathsf{FHILB}$ is also $\dagger$-Drazin. So every linear operator between finite dimensional Hilbert spaces is $\ast$-Drazin, where its $\ast$-Drazin inverse is its Moore-Penrose inverse. 
\end{example}

\section{Revisiting Drazin Opposing Pairs}

In \cite{cockett2024drazin}, the authors introduced the notion of Drazin inverses for opposing pairs, which was the authors first attempt to generalize the notion of Drazin inverses to maps of arbitrary type. In section, we relate Drazin inverse of opposing pairs to $\dagger$-Drazin inverses. In particular, we show that Drazin inverse of opposing pairs correspond precisely to $\dagger$-Drazin inverses in \emph{cofree} dagger categories. 

It is useful to understand the original motivation for considering Drazin inverses of opposing pairs. A fundamental idea behind Drazin inverses is that certain endomorphisms can be undone after iterating them a certain number of times. Thus if one wishes to give a generalization of a Drazin inverse for a map $f: A \to B$ in an arbitrary category, then one also needs a map of dual type $g: B \to A$ so that the Drazin iteration axiom can be expressed. Thus, in an arbitrary category $\mathbb{X}$, an \textbf{opposing pair} $(f,g): A \to B$ is a pair $(f,g)$ consisting of maps of dual type, so $f: A \to B$ and $g: B \to A$. Such an opposing pair, $(f,g): A \to B$, induces two endomorphism $fg: A \to A$ and $gf: B \to B$ for which we can seek Drazin inverses. These in turn -- if they exist -- induce an opposing pair in the opposite direction.  This provides a notion of inverse for an arbitrary pairs of opposing maps. 

\begin{definition}\cite[Def 7.1]{cockett2024drazin} \label{def:opposing-pair-drazin-inverse} In a category $\mathbb{X}$, a \textbf{Drazin inverse} of an opposing pair $(f,g): A \to B$ is an opposing pair of dual type $(f,g)^D = (f^{\frac{D}{g}},g^{\frac{D}{f}}): B \to A$ such that:
\begin{enumerate}[{\bf [DV.1]}]
\item There is a $k \in \mathbb{N}$ such that $(fg)^k f f^{\frac{D}{g}} = (fg)^k$ and $(gf)^k g g^{\frac{D}{f}} = (gf)^k$;
\item $f^{\frac{D}{g}} f f^{\frac{D}{g}} = f^{\frac{D}{g}}$ and $g^{\frac{D}{f}} g g^{\frac{D}{f}} = g^{\frac{D}{f}}$;
\item $ f f^{\frac{D}{g}} =  g^{\frac{D}{f}} g$ and $ f^{\frac{D}{g}} f =  g g^{\frac{D}{f}}$. 
\end{enumerate}
 We say that an opposing pair $(f,g)$ is \textbf{Drazin} if it has a Drazin inverse. The map $f^{\frac{D}{g}}: B \to A$ is called the \textbf{Drazin inverse of $f$ over $g$}, while the map $g^{\frac{D}{f}}: A \to B$ is called the \textbf{Drazin inverse of $g$ over $f$}. The \textbf{Drazin index} of $(f,g)$ is the smallest $k \in \mathbb{N}$ such that {\em \textbf{[DV.1]}} holds, which we denote by $\mathsf{ind}^D(f,g)=k$.
\end{definition}

Drazin inverses of opposing pairs share many of the same properties as Drazin inverses of endomorphisms, see \cite[Sec 7]{cockett2024drazin}. In particular, the Drazin inverse of an opposing pair is unique \cite[Prop 7.7]{cockett2024drazin}. One can also recover Drazin inverses of endomorphisms by considering the Drazin inverse of the opposing pair consisting of an endomorphism and the identity \cite[Lemma 7.10]{cockett2024drazin}. Moreover, since every opposing pair induces two endomorphisms, it turns out that Drazin opposing pairs can also be characterized in terms of Drazin inverses.

\begin{theorem}\label{thm:Draz-opp}\cite[Thm 7.6]{cockett2024drazin} In a category $\mathbb{X}$, the following are equivalent for an opposing pair $(f,g)$: 
    \begin{enumerate}[(i)]
\item $(f,g)$ is Drazin; 
\item $fg$ is Drazin;
\item $gf$ is Drazin. 
\end{enumerate}
In particular, if $(f,g)$ is Drazin then \cite[Cor 7.8]{cockett2024drazin}:
    \begin{enumerate}[(i)]
    \setcounter{enumi}{3}
\item \label{thm:4} $(fg)^D = g^{\frac{D}{f}} f^{\frac{D}{g}}$ and $(gf)^D = f^{\frac{D}{g}}g^{\frac{D}{f}}$; 
\item \label{thm:5} $f^{\frac{D}{g}} = g (fg)^D = (gf)^D g$ and $g^{\frac{D}{f}} = f (gf)^D = (fg)^D f$;
\item \label{thm:6} $\mathsf{ind}^D(f,g) = \mathsf{max}\lbrace \mathsf{ind}^D(fg),\mathsf{ind}^D(gf) \rbrace$. 
\end{enumerate}
\end{theorem}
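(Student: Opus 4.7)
The plan is to reduce the equivalence to the already-established theory of Drazin inverses of endomorphisms by exploiting the composites $fg$ and $gf$. The equivalence $(ii) \Leftrightarrow (iii)$ is immediate from the general categorical fact that $fg$ is Drazin iff $gf$ is Drazin (the same fact invoked in the proof of Thm \ref{thm:Drazin=dag-Drazin}). So the real content is the pair of implications between $(i)$ and $(ii)$, with the explicit formulas (\ref{thm:4})--(\ref{thm:6}) falling out of the constructions used in each direction.

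For $(i) \Rightarrow (ii)$, given a Drazin opposing pair inverse $(f^{\frac{D}{g}}, g^{\frac{D}{f}})$, I would take $(fg)^D := g^{\frac{D}{f}} f^{\frac{D}{g}}$ as the candidate and verify \textbf{[D.1]}--\textbf{[D.3]} directly. Axiom \textbf{[D.1]} follows by rewriting $(fg)^{k+1} g^{\frac{D}{f}} f^{\frac{D}{g}} = (fg)^k f(g g^{\frac{D}{f}})f^{\frac{D}{g}}$, converting $g g^{\frac{D}{f}}$ into $f^{\frac{D}{g}} f$ via \textbf{[DV.3]}, and collapsing by \textbf{[DV.2]} and \textbf{[DV.1]}. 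Axiom \textbf{[D.2]} is analogous: one \textbf{[DV.3]} substitution in the middle $fg$ leaves an expression absorbed by two uses of \textbf{[DV.2]}. Axiom \textbf{[D.3]} follows because both $(fg)(g^{\frac{D}{f}} f^{\frac{D}{g}})$ and $(g^{\frac{D}{f}} f^{\frac{D}{g}})(fg)$ rewrite via \textbf{[DV.3]} to the common expression $g^{\frac{D}{f}} f^{\frac{D}{g}} fg$, simultaneously yielding (\ref{thm:4}).

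For $(ii) \Rightarrow (i)$, I would take $f^{\frac{D}{g}} := g(fg)^D$ and $g^{\frac{D}{f}} := f(gf)^D$ as the candidates. The essential auxiliary identity is the commutation formula $g(fg)^D = (gf)^D g$ together with its dual $f(gf)^D = (fg)^D f$, which give the second equalities in (\ref{thm:5}) and force \textbf{[DV.3]} to hold: for instance $f f^{\frac{D}{g}} = fg(fg)^D$ while $g^{\frac{D}{f}} g = f(gf)^D g = fg(fg)^D$ by commutation. This commutation identity follows from uniqueness of the Drazin inverse applied to $gf$, after checking that $g(fg)^D$ satisfies \textbf{[D.1]}--\textbf{[D.3]} for $gf$ using stabilization of $(fg)^k$ for $k$ above both Drazin indices. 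This is the main obstacle and the workhorse of the correspondence; it is essentially the identity $(gh)^D g = g(hg)^D$ cited as \cite[Lemma 7.5]{cockett2024drazin} at the end of the proof of Thm \ref{thm:Drazin=dag-Drazin}. Once it is in hand, \textbf{[DV.1]} and \textbf{[DV.2]} reduce to routine applications of \textbf{[D.1]} and \textbf{[D.2]} for $fg$ and $gf$, and the index formula (\ref{thm:6}) emerges by comparing the smallest $k$ witnessing each axiom on both sides.
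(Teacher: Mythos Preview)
The paper does not actually prove this theorem: it is imported wholesale from \cite[Thm~7.6 and Cor~7.8]{cockett2024drazin}, with no argument given in the present paper. So there is no ``paper's own proof'' to compare against here.

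That said, your proposal is correct and is the natural argument. A few small remarks: in your verification of \textbf{[D.3]} for $(i)\Rightarrow(ii)$, the common form both sides reduce to is perhaps more transparently written as $g^{\frac{D}{f}} g\, g^{\frac{D}{f}} g$ (equivalently $f f^{\frac{D}{g}} f f^{\frac{D}{g}}$), obtained by one application of \textbf{[DV.3]} on each side; your expression $g^{\frac{D}{f}} f^{\frac{D}{g}} fg$ is the same thing after one more rewrite, so this is only cosmetic. For $(ii)\Rightarrow(i)$, you correctly identify that the whole argument hinges on the commutation identity $g(fg)^D = (gf)^D g$, which is exactly \cite[Lemma~7.5]{cockett2024drazin} as invoked later in the paper; once that is in hand, \textbf{[DV.1]}--\textbf{[DV.3]} are immediate. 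Your treatment of the index formula (\ref{thm:6}) is a bit brisk but the idea is right: the constructions in each direction show $\mathsf{ind}^D(f,g) \geq \max\{\mathsf{ind}^D(fg),\mathsf{ind}^D(gf)\}$ and conversely, since the witnesses for \textbf{[DV.1]} at level $k$ translate directly into witnesses for \textbf{[D.1]} for both $fg$ and $gf$ at level $k$, and vice versa.
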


Now in a $\dagger$-category, every map and its adjoint form an opposing pair, which we call an \textbf{adjoint opposing pair}. Then, it is straightforward to see that a map being $\dagger$-Drazin is equivalent to its adjoint opposing pair being Drazin. 

\begin{proposition} In a $\dagger$-category, a map $f$ is $\dagger$-Drazin if and only if the opposing pair $(f,f^\dagger)$ is Drazin. Moreover, if $f$ is $\dagger$-Drazin then $f^{\frac{D}{f^\dagger}} = f^\partial$ and ${f^\dagger}^{\frac{D}{f}} = {f^\partial}^\dagger$, and also that $\mathsf{ind}^\partial(f) = \mathsf{ind}^D(f,g)$. 
\end{proposition}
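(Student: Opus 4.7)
The plan is to deduce this proposition directly from the two characterization theorems we already have in hand, namely Thm \ref{thm:Drazin=dag-Drazin} and Thm \ref{thm:Draz-opp}, without recomputing the axioms from scratch. The key observation is that both the notion ``$f$ is $\dagger$-Drazin'' and the notion ``the opposing pair $(f,f^\dagger)$ is Drazin'' are already known to be equivalent to the same condition: that the positive endomorphism $ff^\dagger$ (equivalently, $f^\dagger f$) is Drazin in the ordinary sense. So the equivalence of these two notions, and the compatibility of the associated formulas, should follow by pure transitivity.

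Concretely, first I would chain the biconditionals. Thm \ref{thm:Drazin=dag-Drazin} gives $f$ is $\dagger$-Drazin $\Leftrightarrow$ $ff^\dagger$ is Drazin $\Leftrightarrow$ $f^\dagger f$ is Drazin. On the other hand, Thm \ref{thm:Draz-opp} applied to the adjoint opposing pair $(f,f^\dagger)$ gives $(f,f^\dagger)$ is Drazin $\Leftrightarrow$ $ff^\dagger$ is Drazin $\Leftrightarrow$ $f^\dagger f$ is Drazin. Combining these yields the main equivalence.

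Next I would verify the explicit formulas by matching up the expressions the two theorems hand us for the generalized inverses. Thm \ref{thm:Drazin=dag-Drazin}.(\ref{thm:vi}) gives $f^\partial = f^\dagger (ff^\dagger)^D = (f^\dagger f)^D f^\dagger$, while Thm \ref{thm:Draz-opp}.(\ref{thm:5}) gives $f^{\frac{D}{f^\dagger}} = f^\dagger (ff^\dagger)^D = (f^\dagger f)^D f^\dagger$, so $f^{\frac{D}{f^\dagger}} = f^\partial$ on the nose. Dually, Thm \ref{thm:Drazin=dag-Drazin}.(\ref{thm:vi}) gives ${f^\dagger}^\partial = f(f^\dagger f)^D = (ff^\dagger)^D f$, and Thm \ref{thm:Draz-opp}.(\ref{thm:5}) gives ${f^\dagger}^{\frac{D}{f}} = f(f^\dagger f)^D = (ff^\dagger)^D f$, so ${f^\dagger}^{\frac{D}{f}} = {f^\dagger}^\partial$, which equals ${f^\partial}^\dagger$ by Prop \ref{prop:dag-Draz}.(\ref{prop:dag-Draz-adj}). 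Finally, the index equation follows by comparing Thm \ref{thm:Drazin=dag-Drazin}.(\ref{thm:viii}) with Thm \ref{thm:Draz-opp}.(\ref{thm:6}), both of which give the same maximum $\max\{\mathsf{ind}^D(ff^\dagger),\mathsf{ind}^D(f^\dagger f)\}$.

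There is no real obstacle here; the argument is essentially bookkeeping. The one subtle point worth pausing on is to confirm that the formulas handed to us by the two theorems are genuinely the same expression and not merely equal up to an invocation of self-adjointness of $ff^\dagger$ and $f^\dagger f$ (which in any case is immediate). A direct axiom-by-axiom verification is also possible, using \textbf{[$\text{D}^{\dagger}$.3]}--\textbf{[$\text{D}^{\dagger}$.4]} to convert the compatibility axiom \textbf{[DV.3]} (which reads $ff^\partial = {f^\partial}^\dagger f^\dagger$ and $f^\partial f = f^\dagger {f^\partial}^\dagger$) into Prop \ref{prop:dag-Draz}.(\ref{prop:D3D4-expand}), but routing through the existing theorems is shorter and makes the structural content transparent.
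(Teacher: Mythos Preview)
Your proposal is correct and follows essentially the same approach as the paper's proof: both chain the equivalence through ``$ff^\dagger$ is Drazin'' using Thm \ref{thm:Drazin=dag-Drazin} and Thm \ref{thm:Draz-opp}, then read off the explicit formulas and index equality from parts (\ref{thm:vi}), (\ref{thm:viii}) and (\ref{thm:5}), (\ref{thm:6}) respectively, invoking Prop \ref{prop:dag-Draz}.(\ref{prop:dag-Draz-adj}) for ${f^\dagger}^\partial = {f^\partial}^\dagger$.
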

\begin{proof} By combining Thm \ref{thm:Drazin=dag-Drazin} and Thm \ref{thm:Draz-opp}, we get that $f$ is $\dagger$-Drazin if and only if $ff^\dagger$ is Drazin if and only if $(f,f^\dagger)$ is Drazin. By Thm \ref{thm:Drazin=dag-Drazin}.(\ref{thm:vi}) and Thm \ref{thm:Draz-opp}.(\ref{thm:5}), we get that $f^{\frac{D}{f^\dagger}} = f^\dagger (f f^\dagger)^D = f^\partial$ and ${f^\dagger}^{\frac{D}{f}} = f (f^\dagger f)^D = {f^\dagger}^\partial = {f^\partial}^\dagger$. Lastly, Thm \ref{thm:Drazin=dag-Drazin}.(\ref{thm:viii}) and Thm \ref{thm:Draz-opp}.(\ref{thm:6}) give us that $\mathsf{ind}^\partial(f) =  \mathsf{max}\lbrace \mathsf{ind}^D(ff^\dagger),\mathsf{ind}^D(f^\dagger f) \rbrace = \mathsf{ind}^D(f,g)$. 
\end{proof}

We now turn our attention to proving that Drazin inverses of opposing pairs correspond to the $\dagger$-Drazin inverses in \emph{cofree} dagger categories. This makes sense since maps in cofree dagger categories are precisely opposing pairs \cite[Def 3.1.16]{heunen2009categorical}. Indeed, given a category $\mathbb{X}$, let $\mathbb{X}^\leftrightarrows$ be the category that has the same objects as $\mathbb{X}$ and where a map is an opposing pair $(f,g): A \to B$ in $\mathbb{X}$. Composition is defined as $(f,g)(h,k) = (fh, kg)$ and the identity is $(1_A, 1_A)$. Then $\mathbb{X}^\leftrightarrows$ is a $\dagger$-category where $(f,g)^\dagger = (g,f)$. Moreover $\mathbb{X}^\leftrightarrows$ is the cofree $\dagger$-category over $\mathbb{X}$ \cite[Thm 3.1.17]{heunen2009categorical}. 

\begin{theorem}\label{thm:dag-Drazin-cofree} In a category $\mathbb{X}$, an opposing pair $(f,g)$ in $\mathbb{X}$ is Drazin if and only if $(f,g)$ is $\dagger$-Drazin in $\mathbb{X}^\leftrightarrows$. Moreover in this case, $(f,g)^D = (f,g)^\partial$ and $\mathsf{ind}^D(f,g) = \mathsf{ind}^\partial(f,g)$.  
\end{theorem}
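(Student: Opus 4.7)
The plan is to unpack the $\dagger$-Drazin axioms in $\mathbb{X}^\leftrightarrows$ for an opposing pair $(f,g) : A \to B$ together with a candidate $\dagger$-Drazin inverse $(h,k) : B \to A$ (so $h : B \to A$ and $k : A \to B$ in $\mathbb{X}$), and show that they match componentwise with the Drazin opposing pair axioms \textbf{[DV.1]}--\textbf{[DV.3]}. First I would compute the key compound expressions in $\mathbb{X}^\leftrightarrows$ using composition $(a,b)(c,d) = (ac, db)$ and dagger $(a,b)^\dagger = (b,a)$: we have $(f,g)(f,g)^\dagger = (fg, fg)$ and $(f,g)^\dagger(f,g) = (gf, gf)$, so by induction $((f,g)(f,g)^\dagger)^n = ((fg)^n,(fg)^n)$ and $((f,g)^\dagger(f,g))^n = ((gf)^n,(gf)^n)$; moreover $(f,g)(h,k) = (fh, kg)$ and $(h,k)(f,g) = (hf, gk)$.

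Setting $f^{\frac{D}{g}} := h$ and $g^{\frac{D}{f}} := k$, each $\dagger$-Drazin axiom then unpacks into a pair of equations in $\mathbb{X}$. Axiom \textbf{[$\text{D}^{\dagger}$.1]} gives the four scalar equations $(fg)^k fh = (fg)^k$, $kg (fg)^k = (fg)^k$, $hf(gf)^k = (gf)^k$, and $(gf)^k gk = (gf)^k$. Axiom \textbf{[$\text{D}^{\dagger}$.2]} unpacks to $hfh = h$ and $kgk = k$, which is precisely \textbf{[DV.2]}. Axiom \textbf{[$\text{D}^{\dagger}$.3]} reduces to $fh = kg$, and axiom \textbf{[$\text{D}^{\dagger}$.4]} reduces to $hf = gk$; together these are exactly \textbf{[DV.3]}.

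For the forward direction, the first and fourth equations from \textbf{[$\text{D}^{\dagger}$.1]} are precisely \textbf{[DV.1]}, so $(f,g)^D := (h,k) = (f,g)^\partial$ with $\mathsf{ind}^D(f,g) \leq \mathsf{ind}^\partial(f,g)$. For the reverse direction, I have \textbf{[DV.1]}--\textbf{[DV.3]} and need to recover all four equations of \textbf{[$\text{D}^{\dagger}$.1]}. Two come directly from \textbf{[DV.1]}; for the other two I use a short commutativity observation: from \textbf{[DV.3]} we have $fh = kg$ and $hf = gk$, whence $kg \cdot fg = fh \cdot fg = f(hf)g = f(gk)g = fg \cdot kg$, so $kg$ commutes with $fg$ and thus with $(fg)^n$ for every $n$. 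Then $kg(fg)^k = (fg)^k kg = (fg)^k fh = (fg)^k$ using \textbf{[DV.1]} and \textbf{[DV.3]}, and a symmetric calculation ($hf \cdot gf = gk \cdot gf = g(kg)f = g(fh)f = gf \cdot hf$) gives $hf(gf)^k = (gf)^k$. This yields $\mathsf{ind}^\partial(f,g) \leq \mathsf{ind}^D(f,g)$, so the two indices coincide.

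The main obstacle is the asymmetry between \textbf{[$\text{D}^{\dagger}$.1]}, which produces four scalar equations, and \textbf{[DV.1]}, which only contains two; the commutativity argument above is what closes this gap. One should check carefully that this derivation is not circular, since the commutation $kg \cdot fg = fg \cdot kg$ relies only on \textbf{[DV.3]} and associativity of composition, not on \textbf{[DV.1]} itself.
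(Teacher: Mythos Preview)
Your proposal is correct and follows essentially the same approach as the paper: unpack the $\dagger$-Drazin axioms for $(f,g)$ and $(h,k)$ componentwise in $\mathbb{X}^\leftrightarrows$, identify \textbf{[$\text{D}^{\dagger}$.2]}--\textbf{[$\text{D}^{\dagger}$.4]} with \textbf{[DV.2]}--\textbf{[DV.3]}, and bridge the asymmetry between \textbf{[$\text{D}^{\dagger}$.1]} (four scalar equations) and \textbf{[DV.1]} (two) via the commutation $kg\cdot fg = fg\cdot kg$ obtained from \textbf{[DV.3]} --- the paper proves the same commutation in the equivalent form $ff^{\frac{D}{g}}\cdot fg = fg\cdot ff^{\frac{D}{g}}$. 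Your treatment of the index equality is in fact more explicit than the paper's; the only cosmetic blemish is the reuse of the letter $k$ for both the second component of the candidate inverse and the Drazin index.
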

\begin{proof} For the $\Rightarrow$ direction, suppose that $(f,g)$ is Drazin in $\mathbb{X}$. We show that $(f,g)^\partial = (f^{\frac{D}{g}},g^{\frac{D}{f}})$ is the $\dagger$-Drazin inverse of $(f,g)$ in $\mathbb{X}^\leftrightarrows$. We begin by checking {\bf [$\text{D}^{\dagger}$.2]}--{\bf [$\text{D}^{\dagger}$.4]}. 
\begin{enumerate}[{\bf [$\text{D}^{\dagger}$.1]}]
\setcounter{enumi}{1}
\item $(f,g)^\partial (f,g) (f,g)^\partial \stackrel{\text{def.}}{=} (f^{\frac{D}{g}},g^{\frac{D}{f}}) (f,g) (f^{\frac{D}{g}},g^{\frac{D}{f}}) = (f^{\frac{D}{g}} f f^{\frac{D}{g}}, g^{\frac{D}{f}} g g^{\frac{D}{f}}) \stackrel{\text{\textbf{[$\text{DV}$.2]}}}{=} (f^{\frac{D}{g}},g^{\frac{D}{f}}) \stackrel{\text{def.}}{=} (f,g)^\partial$
\item $(f,g) (f,g)^\partial \stackrel{\text{def.}}{=} (f,g) (f^{\frac{D}{g}},g^{\frac{D}{f}}) =  (f f^{\frac{D}{g}}, g^{\frac{D}{f}}g) \stackrel{\text{\textbf{[$\text{DV}$.3]}}}{=} ( g^{\frac{D}{f}}g, f f^{\frac{D}{g}}) = (f f^{\frac{D}{g}}, g^{\frac{D}{f}}g)^\dagger =  \left( (f,g) (f,g)^\partial \right)^\dagger$
\item $(f,g)^\partial (f,g) \stackrel{\text{def.}}{=} (f^{\frac{D}{g}},g^{\frac{D}{f}}) (f,g) = ( f^{\frac{D}{g}} f, g g^{\frac{D}{f}}) \stackrel{\text{\textbf{[$\text{DV}$.3]}}}{=} ( g g^{\frac{D}{f}},  f^{\frac{D}{g}} f) = (f^{\frac{D}{g}}f,gg^{\frac{D}{f}})^\dagger = \left(  (f,g)^\partial (f,g) \right)^\dagger$
\end{enumerate}
For {\bf [$\text{D}^{\dagger}$.6]}, we first note that $fg f f^{\frac{D}{g}}\stackrel{\text{\textbf{[$\text{DV}$.3]}}}{=} f g  g^{\frac{D}{f}}g \stackrel{\text{\textbf{[$\text{DV}$.3]}}}{=}  f f^{\frac{D}{g}} f g$. So $fg f f^{\frac{D}{g}} = f f^{\frac{D}{g}} f g$. Therefore \textbf{[DV.1]} tells us that there is a $k \in \mathbb{N}$ such that $(fg)^k f f^{\frac{D}{g}} = (fg)^k = f f^{\frac{D}{g}}(fg)^k$. So then we compute that:
\begin{gather*}
    \left( (f,g)(f,g)^\dagger \right)^k (f,g) (f,g)^\partial \stackrel{\text{def.}}{=} \left( (f,g) (g,f) \right)^k (f,g) (f^{\frac{D}{g}},g^{\frac{D}{f}}) = \left( fg, fg \right)^k (f f^{\frac{D}{g}},g^{\frac{D}{f}} g) \\ 
    = ( (fg)^k, (fg)^k) (f f^{\frac{D}{g}},g^{\frac{D}{f}} g) = ( (fg)^k f f^{\frac{D}{g}}, g^{\frac{D}{f}}g (fg)^k)\\
\stackrel{\text{\textbf{[$\text{DV}$.3]}}}{=}  ( (fg)^k f f^{\frac{D}{g}}, f f^{\frac{D}{g}} (fg)^k)  \stackrel{\text{\textbf{[$\text{DV}$.1]}}}{=} ( (fg)^k, (fg)^k) =  \left( (f,g)(f,g)^\dagger \right)^k
\end{gather*}
So {\bf [$\text{D}^{\dagger}$.6]} holds. So we conclude that $(f,g)$ is $\dagger$-Drazin in $\mathbb{X}^\leftrightarrows$.

For the $\Leftarrow$ direction, suppose that $(f,g)$ is $\dagger$-Drazin in $\mathbb{X}^\leftrightarrows$. Let $(f,g)^\partial$ be its $\dagger$-Drazin, which we denote as $(f,g)^\partial = (f^{\frac{D}{g}},g^{\frac{D}{f}})$. We will show that $(f,g)$ is Drazin in $\mathbb{X}$ where $(f^{\frac{D}{g}},g^{\frac{D}{f}})$ is its Drazin inverse. To do so, let us expand out the $\dagger$-Drazin inverse axioms. Starting with {\bf [$\text{D}^{\dagger}$.2]}, we get:
\[ (f^{\frac{D}{g}},g^{\frac{D}{f}}) = (f,g)^\partial \stackrel{\text{\textbf{[$\text{D}^{\dagger}$.2]}}}{=} (f,g)^\partial (f,g) (f,g)^\partial = (f^{\frac{D}{g}},g^{\frac{D}{f}})(f,g)(f^{\frac{D}{g}},g^{\frac{D}{f}}) = (f^{\frac{D}{g}}ff^{\frac{D}{g}}, g^{\frac{D}{f}}gg^{\frac{D}{f}}) \]
This give us that $f^{\frac{D}{g}} = f^{\frac{D}{g}}ff^{\frac{D}{g}}$ and $g^{\frac{D}{f}}=g^{\frac{D}{f}}gg^{\frac{D}{f}}$, which is \textbf{[$\text{DV}$.2]}. Next we expand out {\bf [$\text{D}^{\dagger}$.3]} and {\bf [$\text{D}^{\dagger}$.4]}:
\[ (f f^{\frac{D}{g}},  g^{\frac{D}{f}} g) = (f,g)  (f^{\frac{D}{g}},  g^{\frac{D}{f}}) = (f,g) (f,g)^\partial \stackrel{\text{\textbf{[$\text{D}^{\dagger}$.3]}}}{=} \left( (f,g) (f,g)^\partial \right)^\dagger = (f f^{\frac{D}{g}},  g^{\frac{D}{f}} g)^\dagger = (g^{\frac{D}{f}} g, f f^{\frac{D}{g}})  \]
\[ (f^{\frac{D}{g}} f,  g g^{\frac{D}{f}}) =  (f^{\frac{D}{g}},  g^{\frac{D}{f}}) (f,g) =  (f,g)^\partial (f,g) \stackrel{\text{\textbf{[$\text{D}^{\dagger}$.4]}}}{=} \left(  (f,g)^\partial (f,g) \right)^\dagger = (f^{\frac{D}{g}} f,  g g^{\frac{D}{f}})^\dagger = (g g^{\frac{D}{f}}, f^{\frac{D}{g}} f)  \]
    So we get that $f f^{\frac{D}{g}} =  g^{\frac{D}{f}} g$ and $f^{\frac{D}{g}} f =  g g^{\frac{D}{f}}$, so \textbf{[$\text{DV}$.3]} holds. Next we expand out {\bf [$\text{D}^{\dagger}$.6]} and {\bf [$\text{D}^{\dagger}$.7]}: 
\begin{gather*}
    ((fg)^k, (fg)^k) = \left( (f,g)(f,g)^\dagger \right)^k \stackrel{\text{\textbf{[$\text{D}^{\dagger}$.6]}}}{=} \left( (f,g)(f,g)^\dagger \right)^k (f,g)(f,g)^\partial = \left( (f,g) (g,f) \right)^k (f,g) (f^{\frac{D}{g}}, g^{\frac{D}{f}})\\
    = (fg, fg)^k (f  f^{\frac{D}{g}}, g^{\frac{D}{f}} g) = ((fg)^k, (fg)^k) (f  f^{\frac{D}{g}}, g^{\frac{D}{f}} g) = ((fg)^k f  f^{\frac{D}{g}}, g^{\frac{D}{f}} g (fg)^k) = ((fg)^k f  f^{\frac{D}{g}},  f  f^{\frac{D}{g}} (fg)^k)
\end{gather*}
\begin{gather*} ((gf)^k, (gf)^k) = \left( (f,g)^\dagger (f,g) \right)^k = (f,g)^\partial (f,g) \left( (f,g)^\dagger (f,g) \right)^k  \stackrel{\text{\textbf{[$\text{D}^{\dagger}$.7]}}}{=} (f^{\frac{D}{g}}, g^{\frac{D}{f}}) (f,g) \left( (g,f) (f,g) \right)^k\\
= ( f^{\frac{D}{g}} f, g g^{\frac{D}{f}}) \left( gf, gf \right)^k = ( f^{\frac{D}{g}} f, g g^{\frac{D}{f}}) ((gf)^k, (gf)^k) = ( f^{\frac{D}{g}} f (gf)^k, (gf)^k g g^{\frac{D}{f}}) = (g g^{\frac{D}{f}} (gf)^k, (gf)^k g g^{\frac{D}{f}})  \end{gather*}
So we get that $(fg)^k f f^{\frac{D}{g}} = (fg)^k$ and $(gf)^k g g^{\frac{D}{f}} = (gf)^k$, so \textbf{[$\text{DV}$.1]} holds. So we conclude that $(f,g)$ is Drazin in $\mathbb{X}$.  
\end{proof}


\newpage 
\nocite{*}
\bibliographystyle{eptcs}
\bibliography{generic}

\end{document}